\documentclass[a4paper, 11pt, reqno]{amsart}

\usepackage{setspace}\usepackage{mathrsfs}
\usepackage[utf8]{inputenc}
\usepackage{graphicx}\usepackage{constants}
\usepackage{amsmath}
\usepackage{amssymb,amsthm}
\usepackage{cases}
\usepackage{esint}
\usepackage[a4paper,margin=3cm]{geometry}
\usepackage[citecolor=blue, linkcolor=blue,colorlinks]{hyperref}

\graphicspath{ {images/} }
\newtheorem{theorem}{Theorem}
\newtheorem{lemma}{Lemma}
\newtheorem{cor}{Corollary}
\newtheorem{prop}{Proposition}

\theoremstyle{definition}
\newtheorem{rk}{Remark}

\def\R{\mathbb{R}}

\def\half{{\textstyle\frac{1}{2}}}
\def\z{\zeta}
\def\a{\alpha}
\def\e{\epsilon} 

\def\p{\partial}

\font\smathbold=msbm7\font\ssmathbold=msbm7 at 5.5pt
\def\sR{{\hbox{\smathbold\char82}}}\def\ssR{{\hbox{\ssmathbold\char82}}}
\def\Br{B_{\sR^n}}
\def\ni{\noindent}

\def\ba{\begin{aligned}}
\def\ea{\end{aligned}}

\def\wt{\widetilde}
\def\b{\beta}

\def\be{\begin{equation}}\def\ee{\end{equation}}\def\bc{\begin{cases}}\def\ec{\end{cases}}
\def\inj{{\rm {inj}}}\def\Z{{\mathbb Z}}\def\N{{\mathbb N}}
\def\qed{\rightline{\setlength{\fboxsep}{0pt}\setlength{\fboxrule}{0.2pt}\fbox{\rule[0pt]{0pt}{1.3ex}\rule[0pt]{1.3ex}{0pt}}}}

\def\ric{{\rm{Ric } }}

\newcommand\fH[1]{\sbox0{#1}\dimen0=\ht0 \advance\dimen0 -1ex
  \sbox2{\'{}}\sbox2{\raise\dimen0\box2}%
  {\ooalign{\hidewidth\kern.1em\copy2\kern-.5\wd2\box2\hidewidth\cr\box0\crcr}}}

\font\twelvemi=cmmi12 at 12pt\font\elevenmi=cmmi11 at 9 pt
\renewcommand{\chi}{\raisebox{.13\baselineskip}{\hbox{\twelvemi\char31}}}
\newcommand{\gam}{{\raisebox{.08\baselineskip}{\hbox{\twelvemi\char13}}}}
\newcommand{\sgam}{{\raisebox{.0\baselineskip}{\hbox{\elevenmi\char13}}}}

\renewcommand{\gamma}{\gam}
\def\vol{{\rm {Vol }}}

\vskip 0.1in\def\grn_#1{{G_#1^{\sR^{\hskip-.007in n}}}}\def\sgrn_#1{{G_#1^{\ssR^{\hskip-.007in n}}}}
\def\per{{\rm {Per }}}

\def\nn{{\frac{n-1}{n}}}\def\nnn{{\frac n{n-1}}}
\def\PS{P\'olya-Szeg\H o }\def\gp{{g_{p}^{}}}
\def\vt{\tilde v}\def\gpt{{\wt g_p}}\def\rh{r_{\hskip-.2em\scriptscriptstyle H} }\def\diam{{\rm diam }}
\def\im{I_{\hskip-.1em\scriptscriptstyle M} }
\def\dh{\delta_{\hskip-.1em\scriptscriptstyle H} }
\title[Sharp Sobolev inequality] {Sharp Sobolev inequalities on noncompact Riemannian manifolds with bounded Ricci curvature}
\author[ C. Morpurgo, S. Nardulli, L. Qin] { Carlo Morpurgo,  Stefano Nardulli, Liuyu Qin}
 \thanks{The third author was supported by the National Natural Science Foundation of China (12201197)}
\begin{document}

\begin{abstract} 
Given a smooth, complete Riemannian manifold $M$ with bounded Ricci curvature and positive injectivity radius, we derive a sharp Sobolev inequality for the embedding of $W^{1,p}(M)$ into $L^{\frac{np}{n-p}}(M)$, when $1\le p< n$. We will first reduce the inequality to functions having support with small enough volume. In turn, we will show that the inequality for small volumes is implied by a first order uniform asymptotic expansion of the isoperimetric profile for $M$, for small volumes. We will then show that such an expansion follows from a local, uniform Sobolev inequality for functions in  $W^{1,1}$, having support with small enough diameter.

\end{abstract}
\numberwithin{equation}{section}
\maketitle
\tableofcontents

\section{Introduction}\label{1}

Let $(M,g)$ be a  smooth, complete $n-$dimensional  Riemannian manifold, $n\ge2$, and let $\ric, \inj(M)$ denote the Ricci curvature and the injectivity radius, respectively. The main result of this paper is the following:

\medskip
\begin{theorem}\label{thm} On a complete, smooth Riemannian $n-$dimensional  manifold $(M,g)$, suppose that 
\be |\ric |\le K,\qquad \inj(M)>0,\label{ric}\ee
for some $K\ge0.$
 Then, there exists $B$ such that for any $ p\in[1,n)$ 
\be \|u\|_q\le K(n,p)\|\nabla u\|_p+B\|u\|_p,\qquad q=\frac{np}{n-p},\quad u\in W^{1,p}(M),\label{main}\ee
where $K(n,p)$ denotes the best  constant in the Sobolev embedding $W^{1,p}(\R^n)\hookrightarrow L^q(\R^n)$.
\end{theorem}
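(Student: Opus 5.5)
The plan follows the strategy in the abstract, in three steps.

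\emph{Step 1: reduction to functions with small support volume.} Fix $v_0>0$ small. Assume (\ref{main}) holds, with some constant $B_0$, for all $u\in W^{1,p}(M)$ with $\vol(\mathrm{supp}\,u)\le v_0$. To pass to general $u$, I would use a partition of unity adapted to a uniformly locally finite covering of $M$ by balls of radius $r$. Since $|\ric|\le K$ and $\inj(M)>0$, Croke's inequality and Bishop--Gromov give uniform two-sided volume bounds for small balls. So for $r$ small the balls $B(x_i,r)$ have volume $\le v_0$, and a maximal $r/2$-separated net $\{x_i\}$ gives a covering of multiplicity at most $N=N(n,K,\inj)$.

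Take $\eta_i$ with $\sum_i\eta_i^p=1$ and $|\nabla \eta_i|\le C/r$. Then
\[
\|u\|_q^p=\|u^p\|_{q/p}\le \sum_i\|\eta_i^p u^p\|_{q/p}=\sum_i\|\eta_i u\|_q^p ,
\]
using the triangle inequality in $L^{q/p}$, which is valid since $q/p\ge1$. Applying the small-volume inequality to each $\eta_i u$, expanding $\|\nabla(\eta_i u)\|_p^p$, and using $\sum_i\eta_i^p=1$ with bounded multiplicity yields
\[
\|u\|_q^p\le (K(n,p)+\epsilon)^p\|\nabla u\|_p^p + C_\epsilon\|u\|_p^p .
\]
This gives only the $\epsilon$-version, which is not good enough. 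The standard fix (Hebey--Vaugon, Druet) is to run the argument on the $p$-th power form: first prove the small-volume inequality in the form
\[
\|u\|_q^p\le K(n,p)^p\|\nabla u\|_p^p+B\|u\|_p^p ,
\]
then handle the cross terms from $\nabla(\eta_i u)$ by the elementary inequality $(a+b)^p\le a^p+C(a^{p-1}b+b^p)$. The remaining $\|u\|_p^{p-1}\|\nabla u\|_p$-type error is absorbed as in Hebey's book, and the case $p=1$ works directly. Taking $p$-th roots then gives (\ref{main}).

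\emph{Step 2: small volumes via the isoperimetric profile.} I would show that the isoperimetric profile satisfies, uniformly on $M$,
\[
I_M(v)\ge c_n v^{\frac{n-1}{n}}\bigl(1-Cv^{2/n}\bigr)\quad\text{for } v\le v_0,
\]
where $c_n$ is the Euclidean isoperimetric constant; even $1-Cv^{1/n}$ might suffice. Given this, symmetrization by the \PS principle compares $u$ with a radial function on $\R^n$. The coarea formula, together with Talenti/Bliss in the form that yields the sharp $K(n,p)$ term, gives
\[
\|u\|_q^p\le K(n,p)^p\|\nabla u\|_p^p+B\|u\|_p^p
\]
for $\vol(\mathrm{supp}\,u)\le v_0$. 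Concretely, one transplants $u$ to $\R^n$ via a density-modified rearrangement in which the profile deficit $Cv^{2/n}$ becomes a weight. The error term is then controlled by $\int v^{2/n}|\nabla u^*|^p$, or, after a Hardy-type step, by $\|u\|_p^p$. This is the delicate bookkeeping step.

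\emph{Step 3: the profile expansion, the main obstacle.} I would derive the expansion from a uniform local $W^{1,1}$ Sobolev inequality with almost sharp constant,
\[
\|u\|_{\frac{n}{n-1}}\le K(n,1)(1+C\delta)\|\nabla u\|_1 \quad\text{for }\diam(\mathrm{supp}\,u)\le\delta .
\]
This follows from harmonic coordinates: under $|\ric|\le K$ and positive injectivity radius, Anderson's theorem gives harmonic charts of uniform size with $\|g_{ij}-\delta_{ij}\|_{C^{\alpha}}\le C\delta^\alpha$. So the metric is uniformly $(1+C\delta^\alpha)$-close to Euclidean, and the Euclidean inequality transfers with constant $1+O(\delta^\alpha)$.

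The hard part is passing from small diameter to small volume. An isoperimetric region, or a near-minimizer of small volume $v$, might a priori be spread out. I would show that near-minimizers of small volume concentrate. Using the Bishop--Gromov lower bound, a covering argument, and a Nardulli-type mass decomposition, one shows that most of the mass lies in a ball of radius $\sim v^{1/n}$. Cutting off then costs only a lower-order perimeter term. Applying the local inequality to $\chi_E$ with $\delta\sim v^{1/n}$ gives
\[
I_M(v)\ge c_n v^{\frac{n-1}{n}}\bigl(1-Cv^{\alpha/n}\bigr) .
\]
I expect this is sufficient for Step 2 once the Step 2 error estimate is arranged to require only some positive power of $v$.
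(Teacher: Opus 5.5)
\textbf{There are genuine gaps: two steps rest on claims that are false as stated.}

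\textbf{Steps 1--2.} These pass through a statement that is false in general, and the detour is also unnecessary. The partition argument loses nothing if you keep the first-power form and do not expand $p$-th powers. Take $\sum_i\eta_i^p=1$, multiplicity $\le N$ and $|\nabla\eta_i|\le C/r$, and apply the small-volume inequality to $\eta_iu$:
\[
\|u\|_q\le\Big(\sum_i\|\eta_iu\|_q^p\Big)^{\frac1p}\le\Big(\sum_i\big(K(n,p)\|\eta_i\nabla u\|_p+K(n,p)\|u\nabla\eta_i\|_p+B_0\|\eta_iu\|_p\big)^p\Big)^{\frac1p}.
\]
Minkowski's inequality in $\ell^p$ bounds the right-hand side by $K(n,p)\|\nabla u\|_p+(CK(n,p)N^{1/p}r^{-1}+B_0)\|u\|_p$. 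That is already the sharp global inequality; the paper's Theorem \ref{loc-glob} even avoids coverings by splitting $u$ at the level $u^*(\tau)$.

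What you propose instead is the $p$-th power form $\|u\|_q^p\le K(n,p)^p\|\nabla u\|_p^p+B\|u\|_p^p$ for supports of small volume. This fails in general. By Druet's result recalled in the introduction, $I_p^p$ is false when $4<p^2<n$ and the scalar curvature is positive somewhere (e.g.\ on $S^n$, $n\ge5$). The obstruction comes from test functions concentrating at a point, so it persists for arbitrarily small supports. The symmetrization argument could not produce it anyway: the profile deficit enters additively in a pointwise bound for $u^\#$, which yields only the first-power form, as in Theorem \ref{smallprofile-sobolev}.

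\textbf{Step 3.} Here the exponent is decisive, and $C^{0,\alpha}$ control is not enough. You obtain $I_0(v)\ge1-Cv^{\alpha/n}$, but an exponent $\alpha/n<1/n$ cannot suffice. The target inequality with $p=1$, applied to characteristic functions, itself implies $\per(E)\ge nB_n^{1/n}(v^{\frac{n-1}n}-Bv)$, i.e.\ $I_0(v)\ge1-Bv^{1/n}$. Correspondingly, in the rearrangement argument a deficit $\rho^\alpha$ produces the error $\||\xi|^\alpha u^\#\|_q$. Under concentration $u(\lambda\,\cdot)$ this exceeds $\|u\|_p$ by a factor $\lambda^{1-\alpha}$, so it can only be absorbed at the price of an $\epsilon$ in the leading constant.

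The repair is available: $|\ric|\le K$ and $\inj(M)>0$ give a positive $C^{1,\alpha}$ harmonic radius (Anderson). Then $|g_{ij}(\xi)-\delta_{ij}|\le C|\xi|$ as in Lemma \ref{volest}, and the local constant on balls of radius $\delta$ is $K(n,1)(1+C\delta)$.

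Your concentration step is also only heuristic: the cut-off must cost $O(v)$ in perimeter, which is not automatic. It becomes rigorous with exactly the inputs the paper imports:
\begin{itemize}
\item for small $v$, an isoperimetric region exists in $M$ or in a $C^{1,\alpha}$ pointed limit;
\item it satisfies $\diam(\Omega)\le C_0v^{1/n}$;
\item in the limit case it is approximated by $\Phi_j(\Omega_0)\subset M$, with converging volume, perimeter and diameter.
\end{itemize}

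\textbf{Comparison with the paper.} Once repaired, your Step 3 is genuinely shorter than the paper's route. The paper proves the additive local inequality $\|u\|_{\nnn}\le K(n,1)(\|\nabla u\|_1+B\|u\|_1)$ on balls of fixed radius $r_0$ by a blow-up analysis (Theorem \ref{druet}), and then uses only $\diam(\Omega)\le r_0$. With $\diam(\Omega)\le C_0v^{1/n}$ in hand, your elementary multiplicative estimate with $\delta=C_0v^{1/n}$ already gives $\im(v)\ge nB_n^{1/n}v^{\frac{n-1}n}-Cv$.
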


When $M=\R^n$ the inequality \eqref{main} is valid with $B=0$ and for an explicit value of $K(n,p)$ - this is the well known result obtained by Aubin \cite{aubin} and  Talenti \cite{talenti}. The case $p=1$, which was obtained first by Federer-Fleming  \cite{fed-flem} and Fleming-Rishel \cite{flem-rish}, is equivalent to the classical isoperimetric inequality  on $\R^n$:
\be |\Omega|^\nn\le K(n,1)\per(\Omega),\qquad  K(n,1)=\frac1 n B_n^{-\frac1n},\ee
where $B_n$ denotes the volume of the unit ball on $\R^n$.

On general complete manifolds, inequality \eqref{main} is known for $1<p<n$ under stronger assumptions on the curvature tensor $R$, namely
\be|R|\le K,\quad  |\nabla R|\le K, \quad\inj(M)>0,\label{R}\ee
 by results due to Aubin-Li \cite{aubin-li}.  This includes the case of compact manifolds, which was settled independently by Druet \cite{druet-mathann}. The inequality for  $p=1$ is only known in certain cases: when $M$ compact  \cite{druet-gd},  when $n=2$ and $M$ has bounded sectional curvature, or $n\ge 3$ and $M$ has constant sectional curvature \cite{aubin-jdg}.

We point out that in the aforementioned papers the authors actually dealt  with stronger forms of the  Sobolev inequality. Namely, they considered inequalities of type
\be  \|u\|_q^a\le K(n,p)^a\|\nabla u\|_p^a+B\|u\|_p^a,\qquad u\in W^{1,p}(M),\label{Ipa}\tag{$I_p^a$}\ee
for $a>0$ and with $q$ as in \eqref{main}. The notation $I_p^a$ is borrowed from Hebey \cite{hebey-courant}, who denoted such inequalities as $I_{p,opt}^a$. First, let us mention that the constant $K(n,p)^a$ in \eqref{Ipa} is optimal, that is, it cannot be replaced by a smaller constant. This is a general fact, valid on any Riemannian manifold, coming from the fact that a  Riemannian metric is locally Euclidean (see Prop. 4.2 in \cite{hebey-courant} for a proof when $a=1$, which also works for any $a>0$). Secondly,   it's easy to see that if $a>b$ and $I_p^a$ holds then $I_p^b$ also holds. In \cite{aubin-li} the authors settled a conjecture due to Aubin for $1<p<n$, by proving that under \eqref{R} $I_p^p$ holds for $1<p\le2$ and $I_p^2$ holds for $2<p<n$. The same result was proved independently by Druet \cite{druet-mathann} on compact manifolds. In \cite{aubin-li}  the authors also deal with $I_p^a$ when $2<p<n$, and $n\ge 3$,  and prove it true (under \eqref{R}) for certain ranges of $a$ and $p$ (improving $I_p^2$).  In another paper \cite{druet-jfa}, Druet  also proved that inequality $I_p^p$ is false if $4<p^2<n$ and the scalar curvature is positive somewhere. 

We note that $I_2^2$ for $n\ge 3$ was first proved by Hebey-Vaugon \cite{hebey-vaugon-duke} under \eqref{R}, and also under \eqref{ric} and $M$ conformally flat outside a compact set.

Inequality \eqref{main} belongs to the so-called ``A part'' of the famous AB-program  initiated by E. Hebey, concerning  optimal values of $A, B$   in the family of inequalities 
\be \|u\|_q^a\le A\|\nabla u\|_p^a+B\|u\|_p^a,\qquad 1\le p<n,\quad q=\frac{np}{n-p},\quad u\in W^{1,p}(M)\label{AB}\ee
with particular attention to the cases $a=1, a=2, a=p$. See \cite{hebey-courant} and \cite{druet-hebey} for a detailed exposition of the program, and of the  results obtained up to the late 1990s-early 2000s. More recently, the AB program has been reloaded in the context of compact CD and RCD spaces, see for example \cite{nobili-violo}, \cite{nobili}, where  rigidity and stability issues  of optimal Sobolev  inequalities are considered.
\def\avr{{\rm {AVR}}}

Very recently, several authors have been studying sharp Sobolev  inequalities on noncompact manifolds with $\ric\ge0 $ and with  Euclidean volume growth (EVG). In \cite{bk} the authors prove that on such manifolds
\be \|u\|_q\le  \avr^{-\frac 1n}K(n,p)\|\nabla u\|_p,\qquad u\in W^{1,p}(M)\label{sobavr}\ee
where $\avr\in(0,1]$ is the so-called Asymptotic Volume Ratio (arising from the Bishop comparison theorem), and where the constant is sharp. This inequality has been extended in the context of metric measure spaces in \cite{nobili-violo}. Unless $M$ is isometric to $\R^n$ (which means $\avr=1$) one cannot have $B=0$ in \eqref{AB}
when $A=K(n,p)$, even assuming only $\ric\ge 0$ (see \cite{ledoux}, \cite{hebey-courant} Theorem 8.8).  Such ``Euclidean type inequalities'' (see \cite{hebey-courant} section 8]) in general   hold only  in special cases, other than $\R^n$,  for example on Cartan-Hadamard manifolds of dimensions 2,3,4.
On the other hand, we know that if $M$ has EVG, $\ric\ge0$ and \eqref{R} holds (or just \eqref{ric} holds, in view of our new result), then \eqref{AB} holds with $A=K(n,p)$, for some $B$. What this is saying,  is that the best constant $A$ gets worse if we replace the full Sobolev norm with the seminorm  $\|\nabla u\|_p$. This phenomenon, on spaces with EVG,  was already observed in \cite{fmq}, at the level of Moser-Trudinger inequalities, i.e. Sobolev inequalities  for the borderline space $W^{\alpha,n/\alpha}(M)$ (see \cite{fmq} Thm. 1, Thm. 2). The main reason for such behavior is that the sharp Green functions asymptotics for large distances behave like those for small distances, but with a worse constant involving AVR.

\smallskip
Returning to \eqref{Ipa}, the proof of the known results on noncompact manifolds, as far as we can tell, are all based 
on two main (interchangeable) steps. The first step is to  reduce the global problem to a local one, that is to an inequality like \eqref{Ipa} valid for functions whose supports are small enough in diameter. This is typically done via partition of unity or compact exhaustion, see for example \cite{hebey-vaugon-duke}, Lemma 5, \cite{aubin-li} proofs of Thms. 1.1, 1.2, 1.3.

The second step, and the most difficult one, is to establish the local inequality. This is done via PDE methods, typically by a deep blowup analysis of solutions of the  Euler-Lagrange equation associated to the functional  
\be \frac{\|\nabla u\|_p^a+B\|u\|_p^a}{\|u\|_q^a}.\ee
The bounded geometry assumptions in \eqref{R} were essential in the arguments, since they guarantee a uniform, second order approximation of the metric in terms of the Euclidean metric, in geodesic polar coordinates. In \cite{hebey-vaugon-duke} the authors were able to avoid  \eqref{R} assuming only \eqref{ric},  but at the expense of assuming $(M,g)$ conformally flat at infinity, thereby reducing the proof of the local inequality to compact sets,  or to simply connected sets where $g$ is conformally flat.

In this paper we will follow a different approach to globalization, namely we will first prove that the global inequality \eqref{main} can be deduced from  the validity of the same inequality for functions whose supports have small {\emph {measure}}, as opposed to small diameter. This idea came from a similar result obtained recently in \cite{fmq-ann}, Thm 4, related to Moser-Trudinger inequalities. We observe in this regard  that in \cite{fmq-ann}  the set $E$ in the definition of (LMT) was assumed to be bounded, but this is not necessary. The concept of ``local'' in \cite{fmq-ann} is intended to be in terms of the measure, not the metric.

The second idea of this paper, which we believe is new, is that to prove \eqref{main} for small volumes it's enough to have a uniform asymptotic control of the isoperimetric profile of $M$ for small volumes. Specifically, we will show that if $\im(v)=\inf\{\per(E),\, \vol(E)=v\}$ denotes the isoperimetric profile of $M,$ then the validity of 
\be \im(v)\ge I_{\sR^n}(v)-Bv =n B_n^{\frac 1n} v^{\nn}-B v,\qquad 0<v\le \tau_0\label{i1}\ee
for some $B\ge0$ and $\tau_0>0$ guarantees the validity of \eqref{main} for $\mu(\{(u\neq0\})\le \tau$, some $\tau\le \tau_0.$


We remark that on a compact Riemannian manifold $(M,g)$ the following expansion holds

\be \im(v)= I_{\sR^n}(v)-\frac{B_n^{-\frac 1n}}{2(n+2)} S v^{1+\frac1n}+o(v^{1+\frac1 n}) \label{i2}\ee

where $S=\max_{x\in M} S_g(x)$, and where $S_g(x)$ is the scalar curvature at $x$. Such an expansion was first derived by Druet \cite{druet-cr}, \cite{druet-pams}. Later, Nardulli showed that  the error term in \eqref{i2} can be replaced with  $O(v^{1+\frac3n})$  on manifolds with cocompact isometry groups \cite{nardulli-agag}, and on manifolds with enough bounded geometry (in which case the maximum defining $S$ has to be taken in the class of pointed limits at infinity of $(M,g)$)
\cite{noa-imrn}, \cite{nardulli-calcvar}. 

We also note that on  complete manifolds satisfying only $\ric \ge K$, the Bishop comparison theorem guarantees that \eqref{i2} holds with  ``$\le$'', and   with $S=S_{g_K^{}}$, the scalar curvature of the space form $(M_K,g_K^{})$ of constant sectional curvature $K$.

It is important to  point out that for small $v$  the error term  \eqref{i1} is larger than the one in \eqref{i2}, however we are able to prove  its uniformity over $M$ under geometric assumptions  which are weaker than the ones given in the aforementioned papers \cite{noa-imrn}, \cite{nardulli-calcvar}.  On compact manifolds, \eqref{i2} was obtained in \cite{druet-pams}  as a consequence of a local version of $I_1^2$ in the form
\be \|u\|_{\nnn}^2\le K(n,1)^2\|\nabla u\|_1^2+K(n,1)^2\Big(\frac n{n+2}S_g(x_0)+\e\Big)\|u\|_1^2\label{i3}\ee
valid for any fixed $x_0\in M$, $\e>0$ and for all $u\in C_c^\infty\big(B(x_0,r_\e)\big)$, some $r_\e>0,$ depending on $x_0$.

Likewise,  on noncompact manifolds with strong uniform $C^3$ conditions on the metric, Nardulli-Osorio Acevedo in \cite{noa-imrn}  derived \eqref{i3}
with $r_\e$ independent of $x_0$, and from this they eventually deduced \eqref{i2}
 (with the error  $O(v^{1+\frac3n})$).
 
 We mention here that in \cite{bm} the authors proved that the best constant $B$ in $I_p^p$, for $1\le p< \min\{2,\sqrt{n}\big\}$, depends continuously on $g$ in the $C^2$ topology on the space of smooth metrics.

In this paper we will derive \eqref{i1} from a local version of $I_1^1$ on manifolds satisfying \eqref{ric}, namely we will show that there are $B,r_0>0$ such that 
\be \|u\|_\nnn\le K(n,1)\big(\|\nabla u\|_1+B\|u\|_1\big)\label{i4}\ee
for any $u\in C_c^\infty(B(x,r_0))$, and any $x\in M$.

For a given $\Omega\subseteq \R^n$ of finite perimeter, applying the above inequality to (smooth approximations of)  $\chi_\Omega$  will yield immediately \eqref{i1} if $\Omega$ is an isoperimetric region of volume $v$, i.e.  if $\im(v)=\per(\Omega)$, $\vol(\Omega)=v$, and provided that the diameter of $\Omega$ is smaller than $r_0.$ This last condition is guaranteed  for $v$ small enough by  results in \cite{noa-imrn}, \cite{apps-annales}. One difficulty to overcome is that on noncompact manifolds isoperimetric regions of given volume may not exist on $M$. However, under our assumptions,  they will exist either on $M$ or on some $C^{1,\alpha}$ pointed limit manifold by results in \cite{anp}, \cite{apps-annales}, \cite{nardulli-asian}, \cite{nardulli-calcvar},  in which case they can   be well  approximated by regions on $M$, to which inequality \eqref{i4} can be applied.

We need to observe that \eqref{i4} was derived globally  on any smooth compact manifold in \cite{druet-gd}, however the methods used there do not provide enough information on the constant $B$, in order to be able to extend the inequality to arbitrary noncompact manifolds satisfying \eqref{ric}.

Our proof of \eqref{i4} is an adaptation of the one given by Druet  of \eqref{i3} in \cite{druet-pams}, and later extended to the case of noncompact manifolds with enough bounded geometry in \cite{noa-imrn}, Proposition 1 (see \cite{noa-arxiv} for its proof). We claim no originality here, other than the fact that we use  harmonic coordinates rather than geodesic normal coordinates. In the context of ``best constants in the  Sobolev  inequalities'' we are aware of only one result where harmonic coordinates were used, i.e. \cite{hebey-ajm} Theorem 1 (see also \cite{hebey-courant} Theorem 7.1). In that result the constant $A$ is almost sharp, i.e. of type $K(n,p)+\e$.

 The geometric assumption \eqref{ric} guarantees the existence of a positive number $\rh$, the harmonic radius, such that on any geodesic ball $B(x,\rh)$ one can find a harmonic coordinate chart where the components of the metric satisfy uniform $C^1$ (in fact $C^{1,\alpha}$) estimates, and which approximate well the components of the Euclidean metric up to first order, which is enough for the purpose of obtaining \eqref{i4}. We point out that while this method works well for $I_1^1$, there seems to be   no hope to obtain for example \eqref{i3}, which requires the coordinate charts to be normal, at the very least, providing  a uniform approximation of the Euclidean metric up to second  order.  Our proof will in fact lead an upper bound of $B$ of type $c_n \rh^{-1}$, for some explicit $c_n$ depending on $n$ and where $\rh$ is the $C^{1,\alpha}$ harmonic radius.

The constant $B$ in \eqref{i4} is the same as the constant $B$ in  \eqref{i1}. For this reason, and given the expansion in \eqref{i2}, it  is reasonable to expect that one can take the  $B$ in \eqref{i4} to be any small number ($r_0$ depending on it) but we do not know how to prove this yet.
Related to this,  we recall a result by Hebey in  \cite{hebey-courant} Prop. 5.1, which shows that on smooth  Riemannian manifolds of dimension $n\ge 4$, the validity of $I_2^2$ implies that the scalar curvature is bounded above by a multiple (depending on $n$) of $B$. Hence, the validity of $I_2^2$ under $\ric\ge K, \inj(M)>0$ is not guaranteed, since it forces  $|\ric|\le K $ (see \cite{hebey-courant}, Theorem 7.3 and its proof). In view of this, it is natural to speculate that $I_p^\a$ may hold under \eqref{ric}, even when $1<\alpha\le 2$, or at least when $1<\a<2$, but we do not know how to prove this with the techniques presented in this paper.

\bigskip
\ni{\bf Acknowledgments.} The authors would like to thank Emmanuel Hebey for helpful comments.
\bigskip
 
\section{Preliminaries}\label{2}

\bigskip
Let $(M,g)$ be a complete, noncompact, $n-$dimensional Riemannian manifold, with metric tensor $g$. The geodesic distance between two points $x,y\in M$ is denoted by $d(x,y)$, and the open geodesic ball centered at $x$ and with radius $r$ will be denoted as $B(x,r)$. The manifold $(M,g)$ is equipped with the natural Riemannian measure $\mu$ which in any chart satisfies $d\mu=\sqrt{|g|} dm$, where $|g|=\det(g_{ij})$, $g=(g_{ij})$ and $dm$ is the Lebesgue measure.

The volume of a measurable set $E\subseteq M$ is defined  as $\vol(E)=\mu(E)$, and its perimeter as the total variation of $\chi_E$, i.e. 
\be \per(E)=\sup\bigg\{\int_E {\rm div} X d\mu: X\in { \mathcal X}_c^1(M),\; \|X\|_\infty\le 1\bigg\},\ee
where ${\mathcal X}_c^1(M)$ denotes the space of $C^1$ vector fields with compact support on $M$, 

If $E$ has finite perimeter, then its reduced boundary $\partial^*E$ is an $(n-1)$-rectifiable set, and up to modifying $E$ on a set  0 measure, it satisfies   $\overline{\partial_{}^*E }=\partial E$, the topological boundary, and $\per(E)$ coincides with the $(n-1)$-dimensional Hausdorff measure (induced by $\mu$) of $\partial^*E$. If $\partial E$ is smooth, then $\partial^*E=\partial E$.

For the definitions and the basic properties of the classical Sobolev spaces $W^{k,p}$ and $W^{k,p}_0$ on Riemannian manifolds
we shall refer the reader to \cite{hebey-courant}, Chapters 2 and 3.

\smallskip
\ni{\bf Convention.} Throughout the paper, the dependence of various objects ($\vol, \per, \nabla,$ etc.) on the ambient metric where these objects are defined will be suppressed, unless ambiguities arise (see  for ex. Remark \ref{amb}).

\smallskip
  The \emph{decreasing rearrangement} of a measurable function $f$ on $M$ is defined as 

\be f^*(t)=\inf\big\{s\ge0:\; \mu\big(\{x:|f(x)|>s\}\big)\le t\big\},\qquad  t>0\label{rearr}\ee
(we will assume that $\mu(\{x:|f(x)|>s\})<\infty$  for all  $s>0$).

For $\tau>0$ we have

\be \mu( \{x:|f(x)|>f^*(\tau)\})\le \tau\le \mu(\{x:|f(x)|\ge f^*(\tau)\})\label{tau}\ee
and $f^*(\tau)>0$ if and only if $\mu(\{f\neq0\})>\tau$, in which  case we can find a measurable set $E_\tau$ such that 
\be\mu(E_\tau)=\tau,\qquad \{x:|f(x)|>f^*(\tau)\}\subseteq E_\tau\subseteq\{x:|f(x)|\ge f^*(\tau)\}.\label{set1}\ee
(see  [MQ, (3.11)] and the comments thereafter). Additionally, we have
\be \int_{E_\tau}\Phi(|f(x)|)d\mu(x)=\int_0^\tau \Phi\big(f^*(t)\big)dt\label{int1}\ee
for any nonnegative measurable $\Phi$ on $[0,\infty)$, and in \eqref{int1} we can take $E_\tau=M$ when $\tau=\infty.$

The \emph{Schwarz symmetric decreasing rearrangement} of a measurable function $f$ on $M$ is the function $f^{\#}:\R^n\to[0,\infty)$ defined as 
\be f^\#(\xi)=f^*(B_n|\xi|^n),\qquad \xi\in\R^n.\ee
\smallskip

The \emph{isoperimetric profile} of $M$ is the function $\im:\big[0,\vol
(M)\big)\to[0,\infty)$ (where $\vol(M)\in (0,\infty]$) defined as 
\be \im(v)=\inf\big\{\per(E),\; E\; {\text {measurable}},\; \vol(E)=v\big\}.\ee
In the definition of  $\im(v)$ the sets $E$ can be chosen with smooth boundary (see \cite{mfn} Thm.1).

The classical isoperimetric inequality on $\R^n$ states that for every measurable set $E$ with finite measure 
\be \frac{\per(E)}{|E|^\nn}\ge \frac{\per (B_{\sR^n}(1))}{|B_{\sR^n}(1)|^\nn}=n B_n^{\frac1n},\ee
where $B_{\sR^n}(1)$ is the unit ball of $\R^n$, hence
\be I_{\sR^n}(v)=n B_n^{\frac1n} v^\nn.\ee
The \emph{normalized isoperimetric profile} of $M$  is defined to be the function 
\be I_0(v)=\frac{\im(v)}{I_{\sR^n}(v)}=\frac{\im(v)}{n B_n^{\frac1n} v^\nn}.\ee

It turns out that on an arbitrary Riemannian manifolds the function $I_M$ is at least upper semicontinuous, and hence measurable (see \cite{mfn}, Corollary 1).

If $\ric\ge K$ and the unit ball does not collapse, i.e. if there is $\delta>0$ s.t. $\vol(B(x,1))\ge \delta$ for any $x\in M$, then $\im(v)>0$ for any $v>0$ (see \cite{afp} Remark 4.7), and the function $v\to \im(v)$ is continuous  on $\big(0,\vol(M)\big)$, in fact it is H\"olderian of order $1-\frac1n$ (\cite{mfn} Theorem 2). 

Additionally, if $|\ric|\le K$, the unit ball does not collapse, and the $C^1$ harmonic radius is positive, then 
\be I_0(0^+):=\lim_{v\to0^+} I_0(v)=1.\ee
We note that under \eqref{ric} the non collapsing condition is certainly verified, and the $C^{1,\alpha}$ harmonic radius is positive (see Section \ref{5} for more details on this). Hence, under \eqref{ric}, we can guarantee that $I_0(v)$ is  continuous on  $\big[0,\vol(M)\big)$, and $I_0(0)=1.$

\bigskip

The classical  \emph{P\'olya-Szeg\H o inequality} on $\R^n$ (see e.g. \cite{k}) states that for any open set $\Omega\subseteq \R^n$ and any  $u\in W_0^{1,p}(\Omega)$, $1\le p<\infty$, we have 
\be\int_{\Omega^\#} |\nabla u^\#|^p d\xi\le \int_{\Omega} |\nabla u|^p d\xi,
\ee
where $\Omega^\#$ denotes the Euclidean ball centered at $0$ and with volume $\vol(\Omega)$.

Note that several authors define the decreasing rearrangement as in \eqref{rearr} but using the level sets $\{x: f(x)>s\}$, in which case
the P\'olya-Szeg\H o inequality must be stated for nonnegative functions.

\smallskip
On an arbitrary complete Riemannian manifold a P\'olya-Szeg\H o inequality exists in the form
\be\int_{\Omega^\#} I_0(B_n |\xi|^n)^p |\nabla u^\#(\xi)|^p d\xi\le \int_{\Omega} |\nabla u(x)|^p d\mu(x),\qquad u\in C_c^\infty(\Omega),\label{ps}
\ee
where $\Omega$ is an arbitrary open set in $M$. Such an inequality does not appear explicitly in the literature, but it can be obtained from standard methods via the co-area formula, H\"older's inequality (for $p>1$),  and the very definition of the isoperimetric profile. See for example the proof of Lemma 2.1  in \cite{kr}, which is stated for $p=n$ but the proof works for any $p\ge1$. The above inequality can be obtained from that proof  by replacing Isop$(\Omega,g)$ with $I_0(V(t))$ in the first inequality of page 2373, and then using the co-area formula.  We also note that a version of \eqref{ps}  is stated in \cite{apps-mathann} Prop 3.10 (they also consider RCD$(0,N)$ spaces which technically do not include our spaces, and they only consider $p>1$). 
An  outline of the proof of \eqref{ps} is given in the Appendix.

\vskip1em
\section{Sobolev inequalities: from small volumes to global}\label{3}

Let $(M,g)$ be a complete, $n-$dimensional  Riemannian manifold, without boundary, and let $\mu$ be  the associated Riemannian measure. 

For the rest of this paper we will let 
\be 1\le p<n,\qquad \quad q=\frac{np}{n-p}.\label{qp}\ee
 We will say that given $A>0$ the  \emph{Sobolev inequality with constant $A$ holds for small volumes on $M$}, if there are $\tau>0$ and  $B\ge 0$ such that for any open set $\Omega$ with $\vol(\Omega)<\tau$, and for all $u\in W_0^{1,p}(\Omega)$  we have
\be \|u\|_q\le A\|\nabla u\|_p+B\|u\|_p.\label{sob}\ee
Likewise, we will say that the \emph{Sobolev inequality with constant $A$ holds globally on $M,$} if there is $B\ge0$ such that \eqref{sob} holds for all $u\in W^{1,p}(M).$

\smallskip
\begin{theorem}\label{loc-glob} Given $p,q$ as in \eqref{qp}, and $A>0$, if the Sobolev inequality with constant $A$ holds for small volumes on $M$, then it holds globally.

\end{theorem}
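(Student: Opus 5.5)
The plan is to split $u$ at a height chosen through the decreasing rearrangement. The piece above that height has support of small measure, so the small-volume inequality applies to it. The piece below it is bounded, and its $L^q$ norm is controlled by its $L^p$ norm.

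\emph{Reduction.} On a complete manifold $C_c^\infty(M)$ is dense in $W^{1,p}(M)$ (see \cite{hebey-courant}). Both sides of \eqref{sob} are continuous in $W^{1,p}$, with Fatou's lemma handling the left side. So it suffices to prove \eqref{sob} for $u\in C_c^\infty(M)$, and we may replace $u$ by $|u|$, which is Lipschitz with $|\nabla |u||=|\nabla u|$ a.e. Let $\tau,B$ be the constants from the small-volume hypothesis and set
\[
s=u^*(\tau/2).
\]
If $s=0$, then $\mu(\{u\ne0\})\le\tau/2<\tau$. The set $\Omega=\{|u|>0\}$ is open, contains the support of $u$ up to a null set, and $u\in W^{1,p}_0(\Omega)$, so \eqref{sob} holds directly.

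\emph{Main case $s>0$.} Write $|u|=v+w$ with
\[
v=(|u|-s)_+,\qquad w=\min(|u|,s).
\]
The function $v$ is Lipschitz and compactly supported in the open set $\Omega=\{|u|>s\}$. By \eqref{tau}, $\mu(\Omega)\le\tau/2<\tau$. Since $v$ is continuous and vanishes on $\partial\Omega$, we have $v\in W^{1,p}_0(\Omega)$. The hypothesis then gives
\[
\|v\|_q\le A\|\nabla v\|_p+B\|v\|_p\le A\|\nabla u\|_p+B\|u\|_p,
\]
because $|\nabla v|\le|\nabla u|$ a.e. and $0\le v\le |u|$.

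For $w$, again by \eqref{tau} we have $\mu(\{|u|\ge s\})\ge\tau/2$. Chebyshev's inequality then gives $s^p\,\tau/2\le\|u\|_p^p$, that is $s\le(2/\tau)^{1/p}\|u\|_p$. Since $0\le w\le s$ and $w\le|u|$,
\[
\|w\|_q^q\le s^{q-p}\|w\|_p^p\le s^{q-p}\|u\|_p^p ,
\]
hence
\[
\|w\|_q\le (2/\tau)^{\frac1p-\frac1q}\|u\|_p=(2/\tau)^{\frac1n}\|u\|_p .
\]
The triangle inequality yields
\[
\|u\|_q\le\|v\|_q+\|w\|_q\le A\|\nabla u\|_p+\bigl(B+(2/\tau)^{1/n}\bigr)\|u\|_p,
\]
which is the global inequality with the same constant $A$ and a larger $B$.

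\emph{Main obstacle.} There is no real analytic difficulty here; the delicate points are technical. First, the superlevel set must be open and of measure strictly less than $\tau$; choosing the level $u^*(\tau/2)$ and working with continuous $u$ takes care of both. Second, the density/approximation step must pass to all of $W^{1,p}(M)$; completeness of $M$ gives this. Note that $A$ is not affected by the truncation, because the gradient of $w$ is simply discarded: $w$ is estimated purely through its $L^\infty$ bound $s$ and its $L^p$ norm.
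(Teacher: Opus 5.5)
Your proof is correct. Its overall strategy matches the paper's: truncate at a level given by the decreasing rearrangement, apply the small-volume inequality to the top part $(|u|-s)_+$, and show that the bounded remainder costs only $O(\tau^{-1/n})\|u\|_p$. The differences are in how the pieces are handled.

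\begin{itemize}
\item \textbf{Sign reduction.} The paper reduces to $u\ge0$ by splitting $u=u^+-u^-$ and using the $\ell^p\subset\ell^q$ inequality. You simply pass to $|u|$, which is shorter.
\item \textbf{The low part.} The paper truncates at $s=u^*(\tau)$ and splits the low part into $s\chi_{\{u\ge s\}}$ and $u\chi_{\{u<s\}}$. It then bounds the second piece by chopping $\int_\tau^\infty (u^*)^q\,dt$ into blocks $[k\tau,(k+1)\tau]$, using monotonicity of $u^*$ and $\ell^p\subset\ell^q$ again. You treat the whole low part $\min(|u|,s)$ at once, via $w^q\le s^{q-p}w^p$ and the Chebyshev bound $s\le(2/\tau)^{1/p}\|u\|_p$. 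This is more elementary, and it gives the constant $B+(2/\tau)^{1/n}$ in place of the paper's $B+2\tau^{-1/n}$.
\item \textbf{Strict volume inequality.} Truncating at $u^*(\tau/2)$ also respects the strict condition $\vol(\Omega)<\tau$ in the hypothesis. The paper's level $u^*(\tau)$ only yields $\mu(\{u>u^*(\tau)\})\le\tau$; this is harmless since $\tau$ can be shrunk, but the paper passes over it.
\end{itemize}

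One small wording point. The function $v=(|u|-s)_+$ is not literally compactly supported in $\{|u|>s\}$. The membership $v\in W_0^{1,p}(\Omega)$ holds because $(v-\varepsilon)_+$ has compact support in $\Omega$ and converges to $v$ in $W^{1,p}$. That is what your remark ``continuous and vanishes on $\partial\Omega$'' amounts to, and the paper uses the same fact without comment.
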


\ni{\bf Proof.}  It is enough to prove \eqref{sob} for $u\in C_c^\infty(M)$. If  $u=u^+-u^-$, then $u^+=u\chi_{\{u>0\}}\in W_0^{1,p}(\{u>0\})$ and $\nabla u^+=\nabla u\chi_{\{u>0\}}$, and similar identities hold for $u^{-}$. Using the elementary inequality 
\be \bigg(\sum_{j=1}^N a_j^q\bigg)^{\frac 1q}\le \bigg(\sum_{j=1}^N a_j^p\bigg)^{\frac 1p},\qquad q>p,\qquad a_j\ge0,\qquad N\in\N\label{ineq}\ee
it is easy to see that without loss of generality we can assume  $u\ge0.$ Indeed, if that is the case then
\be\ba \|u\|_q&=\big(\|u^+\|_q^q+\|u^-\|_q^q\big)^{\frac 1q}\le\Big(\big(A\|\nabla u^+\|_p+B\|u^+\|_p\big)^q+\big(A\|\nabla u^-\|_p+B\|u^-\|_p\big)^q\Big)^{\frac 1q}\cr&
\le\Big(A^q\|\nabla u^+\|_p^q+A^q\|\nabla u^-\|_p^q\Big)^{\frac1q}+\Big(B^q\| u^+\|_p^q+B^q\|u^-\|_p^q\Big)^{\frac1q}\cr&
\le \Big(A^p\|\nabla u^+\|_p^p+A^p\|\nabla u^-\|_p^p\Big)^{\frac1p}+\Big(B^p\| u^+\|_p^p+B^p\|u^-\|_p^p\Big)^{\frac1p}=A\|\nabla u\|_p+B\|u\|_p.
\ea\ee
Assume then $u\ge0$. If $\mu(\{u>0\})\le \tau$ there is nothing to prove, so we can assume $\mu(\{u>0\})>\tau$, i.e. $u^*(\tau)>0$. Write
\be u=\big(u-u^*(\tau)\big)^++u^*(\tau)\chi_{\{u\ge u^*(\tau)\}}+u\chi_{\{u<u^*(\tau)\}}.\ee
and using Minkowski's inequality we get
\be\|u\|_q\le \|\big(u-u^*(\tau)\big)^+\|_q+u^*(\tau)\mu\big(\{u\ge u^*(\tau)\}\big)^{\frac1q}+\bigg(\int_{\{u<u^*(\tau)\}} u^q d\mu\bigg)^{\frac1q}.
\ee
From \eqref{tau} and the fact that $q>p$ we have
\be\ba u^*(\tau)\mu\big(\{u\ge u^*(\tau)\}\big)^{\frac1q}&\le \tau^{\frac1q-\frac1p}u^*(\tau)\mu\big(\{u\ge u^*(\tau)\}\big)^{\frac1p}\cr&=\tau^{-\frac1n}\bigg(\int_{\{u\ge u^*(\tau)\}} u^*(\tau)^p d\mu\bigg)^{\frac1p}\le \tau^{-\frac1n}\|u\|_p.\ea\ee

From \eqref{set1} and \eqref{int1} we obtain
\be\int_{\{u<u^*(\tau)\}} u^q d\mu=\int_M u^q d\mu -\int_{\{u\ge u^*(\tau)\}} u^q d\mu\le \int_\tau^\infty \big(u^*(t)\big)^qdt\ee
and  therefore, using \eqref{ineq}
\be\ba\bigg(\int_{\{u<u^*(\tau)\}} u^q d\mu\bigg)^{\frac1q}&\le \bigg(\int_\tau^\infty \big(u^*(t)\big)^qdt\bigg)^{\frac1q}=
\bigg(\sum_{k=1}^\infty\int_{k\tau}^{(k+1)\tau} \big(u^*(t)\big)^qdt\bigg)^{\frac1q}\cr&\le
\bigg(\sum_{k=1}^\infty\tau\big(u^*(k\tau)\big)^q\bigg)^{\frac1q}\le \tau^{\frac1q-\frac1p}\bigg(\sum_{k=1}^\infty\tau\big(u^*(k\tau)\big)^p\bigg)^{\frac1p}\cr&\le  \tau^{\frac1q-\frac1p}\bigg(\sum_{k=1}^\infty\int_{(k-1)\tau}^{k\tau} \big(u^*(t)\big)^pdt\bigg)^{\frac1p}=
\tau^{-\frac1n}\|u\|_p.
\ea\ee
Since $\big(u-u^*(\tau)\big)^+\in W_0^{1,p}(\{u-u^*(\tau)>0\})$ and $\mu\big(\{u-u^*(\tau)>0\}\big)\le \tau$, under the given assumption of  Sobolev inequality for small volumes we obtain
\be\ba \|u\|_q&\le A\|\nabla \big(u-u^*(\tau)\big)^+\|_p+B\|\big(u-u^*(\tau)\big)^+\|_p+2\tau^{-\frac1n}\|u\|_p
\cr&\le A\|\nabla u\|_p+(B+2\tau^{-\frac1n})\|u\|_p.\ea
\ee
\rightline\qed

\vskip1em
\section{From isoperimetric profile expansions to Sobolev inequalities for small volumes}\label{4}

\medskip
In this section we show how a first order expansion of the isoperimetric profile for small volumes, implies the sharp Sobolev inequality for small volumes.

\begin{theorem}\label{smallprofile-sobolev}
Assume that on a  complete Riemannian manifold   there exist constants $\tau_0>0$ and $B\ge0$ such that  $I_0(0^+)=1$ and 
\be I_0(v)\ge 1- B v^{\frac 1n},\qquad 0<v\le \tau_0.\label{cond}\ee
Then the Sobolev inequality \eqref{sob} with constant $K(n,p)$ holds for small volumes on $M$, in particular, there is $\tau\le \tau_0$ such that for any $\Omega$ open with $\vol(\Omega)\le \tau$
\be \|u\|_q\le K(n,p)\|\nabla u\|_p+4B\big(1+ B_n^{\frac 1n} K(n,p)\big)\|u\|_p,\qquad u\in W_0^{1,p}(\Omega).\label{sob3} \ee
\end{theorem}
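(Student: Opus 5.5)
The approach is to transplant $u$ to $\R^n$ via the P\'olya-Szeg\H o inequality \eqref{ps} and apply the Euclidean sharp inequality there. Fix $\Omega$ open with $\vol(\Omega)\le\tau$ and $u\in C_c^\infty(\Omega)$; the general case $u\in W_0^{1,p}(\Omega)$ follows by density. Rearrangement preserves $L^r$ norms, $\|u\|_q=\|u^\#\|_q$ and $\|u\|_p=\|u^\#\|_p$, and $u^\#$ is supported in $\Omega^\#$, the ball of radius $R$ with $B_nR^n=\vol(\Omega)\le\tau$. By \eqref{cond}, for $\xi\in\Omega^\#$ we have
\[
I_0(B_n|\xi|^n)\ge 1-BB_n^{\frac1n}|\xi|.
\]
Choosing $\tau$ small enough that $BB_n^{1/n}R\le B\tau^{1/n}\le \frac12$, \eqref{ps} gives
\[
\Big\|\big(1-BB_n^{\frac1n}|\xi|\big)\nabla u^\#\Big\|_{L^p(\R^n)}\le\|\nabla u\|_{L^p(M)}.
\]
The remaining task is a Euclidean weighted estimate for the radial, nonincreasing function $v=u^\#$ supported in a small ball:
\[
\|v\|_q\le K(n,p)\big\|(1-c|\xi|)\nabla v\big\|_p+C\|v\|_p,\qquad c=BB_n^{\frac1n}.
\]

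To get this, I would use the multiplier $w=(1-c|\xi|)v$. Since $w\ge \frac12 v$ on the support and $w\le v$, the sharp Euclidean inequality gives
\[
\|w\|_q\le K(n,p)\|\nabla w\|_p\le K(n,p)\big\|(1-c|\xi|)\nabla v\big\|_p+cK(n,p)\|v\|_p,
\]
because $\nabla w=(1-c|\xi|)\nabla v-c\,v\,\xi/|\xi|$. We then need to pass from $\|w\|_q$ back to $\|v\|_q$. We have $\|v\|_q\le\|w\|_q+c\||\xi|v\|_q$, and the error term $c\||\xi|v\|_q$ is not obviously controlled by $\|v\|_p$. I would handle it by the radial decay of $v$: since $v$ is nonincreasing, $v(r)^pB_nr^n\le\|v\|_p^p$, so $|\xi|\,v(\xi)\le B_n^{-1/p}|\xi|^{1-n/p}\|v\|_p$. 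This pointwise bound, however, gives a $q$-integral $\int |\xi|^{q-nq/p}\,\dots$ that is borderline, since $q(1-n/p)=-n$.

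A better route is to interpolate: write $|\xi|v\le \varepsilon v+\ldots$, or split $|\xi|v$ on $\{|\xi|<\rho\}$ and $\{|\xi|\ge\rho\}$. The inner part is bounded by $c\rho\|v\|_q$ and is absorbed into the left side. The outer part has $v\le B_n^{-1/p}\rho^{-n/p}\|v\|_p$ there, so the inequality $\|v\,\chi_{\{|\xi|\ge\rho\}}\|_q^q\le \sup(v)^{q-p}\|v\|_p^p$ gives
\[
\big\||\xi|v\,\chi_{\{|\xi|\ge\rho\}}\big\|_q\le R\,B_n^{-\frac1n}\rho^{-1}\|v\|_p.
\]
Taking $\rho=R$ makes the outer part a constant multiple of $\|v\|_p$ with constant depending only on $n$. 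The inner part becomes $cR\|v\|_q\le\frac12\|v\|_q$ (or smaller after shrinking $\tau$), which is absorbed with a factor $(1-cR)^{-1}$. That factor multiplies $K(n,p)$, and since $cR\to0$ this spoils sharpness only by $O(R)$ times $\|\nabla v\|$. This must be converted into an $L^p$ term, which is not automatic.

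So the main obstacle is keeping the gradient constant exactly $K(n,p)$. To avoid absorption, I would instead compare directly. Use the one-dimensional structure: $v$ radial with profile $v(r)$, and write the weighted gradient norm as $\int(1-cr)^p|v'|^p\,r^{n-1}dr$. Then change variables $r\mapsto s$ so that the weighted measure becomes Euclidean, namely $(1-cr)r^{n-1}dr\approx s^{n-1}ds$. This converts $v$ into a radial function $\tilde v$ on $\R^n$ whose gradient norm is at most the weighted one and whose $L^q$ norm differs from $\|v\|_q$ only by a factor $1+O(cR)$ on the small support. The $O(cR)\|v\|_q$ loss is then bounded by $O(c)\,R\,\sup v\,R^{n/q}\lesssim c\|v\|_p$ via monotonicity, since $R^{1+n/q-n/p}=1$. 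This yields the constant $4B(1+B_n^{1/n}K(n,p))$ after choosing $\tau$ with $B\tau^{1/n}\le\frac12$.
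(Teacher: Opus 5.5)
\textbf{There is a genuine gap in the route you finally commit to.}

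First, the change of variables is never constructed, and the prescription you indicate does not do what you need. With $s^{n-1}ds=(1-cr)r^{n-1}dr$ one gets $s^n=r^n\big(1-\frac{n}{n+1}cr\big)$. Already for $p=1$, the inequality $\|\nabla\tilde v\|_1\le\|(1-c|\xi|)\nabla v\|_1$ for radial profiles reduces to $s^{n-1}\le(1-cr)r^{n-1}$. But $s^{n-1}=r^{n-1}\big(1-\frac{n}{n+1}cr\big)^{\frac{n-1}{n}}>(1-cr)r^{n-1}$ for every $r>0$, so it fails for every nonconstant profile.

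Second, and more fundamentally, your last step is false: a loss of size $O(cR)\|v\|_q$ can never be bounded by $C\|v\|_p$. Monotonicity gives $v(\xi)\le B_n^{-1/p}|\xi|^{-n/p}\|v\|_p$. That controls $v$ away from the origin, but not $\sup v=v(0)$. For $v_\lambda(\xi)=\lambda^{n/q}\phi(\lambda\xi)$ with $\lambda\to\infty$, $\|v_\lambda\|_q$ stays fixed while $\|v_\lambda\|_p=\lambda^{-1}\|\phi\|_p\to0$. A multiplicative $1+O(cR)$ loss on the $L^q$ norm therefore only yields the almost-sharp constant $K(n,p)\big(1+O(B\tau^{1/n})\big)$, which is exactly what the theorem must avoid.

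\textbf{The missing idea.} Every error term must carry the weight $|\xi|$, so that it vanishes for concentrating profiles. Such a term is controlled by applying the decay bound only to the factor $(u^\#)^{q-p}$, not to all of $(u^\#)^q$ (which is what produced your divergent $|\xi|^{-n}$):
\[
\int_{\R^n}|\xi|^q(u^\#)^q\,d\xi\le B_n^{1-\frac qp}\|u\|_p^{q-p}\int_{\R^n}|\xi|^{q-\frac{n(q-p)}{p}}(u^\#)^p\,d\xi=B_n^{-\frac qn}\|u\|_p^q,
\]
because $q-\frac{n(q-p)}{p}=0$. This is precisely how the paper handles its term $II$.

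With this bound, your abandoned first route closes at once. Since $0\le1-c|\xi|\le I_0(B_n|\xi|^n)$ on the support, $w=(1-c|\xi|)u^\#$ satisfies $\|w\|_q\le K(n,p)\big(\|\nabla u\|_p+c\|u\|_p\big)$. Hence $\|u\|_q\le\|w\|_q+c\,\||\xi|u^\#\|_q\le K(n,p)\|\nabla u\|_p+B\big(1+B_n^{1/n}K(n,p)\big)\|u\|_p$.

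For comparison, the paper writes $u^\#\le V+4c|\xi|u^\#+4cF$, where $|\nabla V|=I_0(B_n|\xi|^n)|\nabla u^\#|$ and $F(\xi)=\int_{|\xi|}^Ru^\#(\rho\xi^*)\,d\rho$ is handled by the Euclidean Sobolev inequality. That decomposition uses the upper bound $I_0\le2$ coming from $I_0(0^+)=1$, and it ends with the factor $4B$. The repaired multiplier argument needs only the lower bound on $I_0$ and gives a smaller constant.
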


\ni {\bf Proof.}
It is enough to prove \eqref{sob3} for  $u\in C_c^\infty(\Omega)$, with $\Omega$ open with $\vol(\Omega)\le \tau$. Let us pick $\tau\le \tau_0$ so small so that 
\be1-B\tau^{\frac1n}\ge \frac12,\qquad I_0(v)\le 2, \qquad {\text {for}} \;\;0\le v\le \tau,\label{tt}\ee
where for the second condition we use the fact that $I_0(0^+)=1$.

Since $u\in C_c^\infty(\Omega)$ then  $u^\#$ is Lipschitz, and supported  in $\Omega^\#\subseteq B(0,R)$,  where $B_n R^n=~\tau$. Hence,
\be\ba &u^\#(\xi)=\int_{|\xi|}^R (-\partial_\rho) u^\#(\rho\xi^*)d\rho=\int_{|\xi|}^RI_0(B_n\rho^n)^{-1}I_0(B_n\rho^n) (-\partial_\rho) u^\#(\rho\xi^*)d\rho\cr&\le \int_{|\xi|}^R(1+2BB_n^{\frac 1n}\rho) I_0(B_n\rho^n) (-\partial_\rho) u^\#(\rho\xi^*)d\rho
\le  v(\xi)+4BB_n^{\frac 1n}\int_{|\xi|}^R \rho (-\partial_\rho) u^\#(\rho\xi^*)d\rho\cr&=v(\xi)+4BB_n^{\frac 1n}|\xi| u^\#(\xi)+4BB_n^{\frac 1n}\int_{|\xi|}^R  u^\#(\rho\xi^*)d\rho
\ea
\ee
where we set 

\be v(\xi)=\int_{|\xi|}^R I_0(B_n\rho^n)(-\partial_\rho)u^\#(\rho\xi^*)d\rho.\ee

From the \PS inequality \eqref{ps} we  have
\be\ba \int_{B(0,R)}|\nabla v(\xi)|^p d\xi&=\int_{B(0,R)}I_0(B_n|\xi|^n)^p|\nabla u^\#(\xi)|^p d\xi \le\int_M|\nabla u(x)|^p d\mu(x).\label{psv}\ea\ee
From Minkowski's inequality
\be\ba\|u\|_q&= \bigg(\int_{B(0,R)} \big(u^\#(\xi)\big)^qd\xi \bigg)^{\frac1q}\le\bigg(\int_{B(0,R)}\big(v(\xi)\big)^qd\xi \bigg)^{\frac1q}+4BB_n^{\frac 1n}\bigg(\int_{B(0,R)}|\xi|^{q} \big(u^\#(\xi)\big)^qd\xi \bigg)^{\frac1q}\cr&+4BB_n^{\frac 1n} \Bigg(\int_{B(0,R)} \bigg(\int_{|\xi|}^R u^\#(\rho\xi^*)d\rho\bigg)^q d\xi \Bigg)^{\frac1q}:=I+II+III\ea\ee
From \eqref{psv} and the sharp Sobolev inequality on $\R^n$ we have
\be I=\|v\|_q\le K(n,p)\|\nabla v\|_p\le K(n,p)\|\nabla u\|_p.\label{I}\ee

To estimate the second term, first note that $u^\#(\xi)\le \|u\|_p B_n^{-\frac1 p}|\xi|^{-\frac n p }$, since for any $\xi\in B(0,R)$
\be \|u\|_p\ge \bigg(\int_{B(0,|\xi|)}\big(u^\#(\eta)\big)^pd\eta\bigg)^{\frac 1p}\ge u^\#(\xi) (B_n|\xi|^n)^{\frac 1p}.\ee
Hence,
\be\ba&\int_{B(0,R)}|\xi|^{q} \big(u^\#(\xi)\big)^qd\xi =\int_{B(0,R)}|\xi|^{q} \big(u^\#(\xi)\big)^{q-p}\big(u^\#(\xi)\big)^pd\xi 
\cr&\le \int_{B(0,R)}|\xi|^{q} \big(\|u\|_pB_n^{-\frac1p}|\xi|^{-\frac np}\big)^{q-p}\big(u^\#(\xi)\big)^pd\xi \cr& =B_n^{1-\frac q p}\|u\|_p^{q-p}\int_{B(0,R)}|\xi|^{q-\frac {nq} p+n} \big(u^\#(\xi)\big)^pd\xi =B_n^{-\frac q n}\|u\|_p^{q-p}\int_{B(0,R)} \big(u^\#(\xi)\big)^pd\xi\cr&= B_n^{-\frac q n}\|u\|_p^q \ea\ee
so that 
\be II\le 4B\|u\|_p.\ee

\smallskip
To estimate $III$, we just observe that the function on $\R^n$ 
\be F(\xi)=\int_{|\xi|}^R  u^\#(\rho \xi^*)d\rho\ee
is in $W_0^{1,p}(B(0,R))$, and $|\nabla F|=u^\#$ (note that $u^\#(\rho \xi^*)$ only depends on $\rho$), hence from the Sobolev inequality \eqref{I} applied to $F$  we get 
\be III= 4B B_n^{\frac 1n} \|F\|_q\le 4B B_n^{\frac 1n}  K(n,p)\|\nabla F\|_p=4B B_n^{\frac 1n} K(n,p)\|u\|_p.\ee \qed

\begin{rk} From the above proof  it's clear that the factor $4B$ in \eqref{sob3} can be reduced to $2B(1+\e)$ for a suitable $\tau$ depending on $\e$ (modify the right-hand side of the first estimate in \eqref{tt} to $1+\e$).
\end{rk}

\begin{rk} As it  apparent in the above  proof, the conclusion of Theorem 3 holds assuming only  
\be\frac12\le 1-Bv^{\frac1n}\le I_0(v)\le 1+\delta,\qquad 0< v\le \tau\ee
for some $\delta, \tau>0$.
\end{rk}

\vskip1em
\section{A local Sobolev inequality for $W^{1,1}$}\label{5}
For $f:\Omega\to\R$, $\Omega$ open and bounded on $\R^n$, define
\be\|f\|_{C(\Omega)}=\sup_{\xi\in \Omega}|f(\xi)|\ee
and for $\alpha\in (0,1]$ define the seminorm
\be |f|_{C^{0,\a}(\Omega)}=\sup_{\xi,\eta\in \Omega\atop \xi\neq\eta}\frac{|f(\xi)-f(\eta)|}{|\xi-\eta|^\a}.\ee

We recall some basic definitions about harmonic radius, mainly taken from \cite{hebey-herzlich}.
Given a smooth Riemannian manifold without boundary, let $k\in \Z,\; k\ge0$, $\alpha\in (0,1)$, $Q>1$, and $x\in M$. The \emph{$C^{k,\alpha}$ harmonic radius  at $x$}, denoted as $\rh(g,Q,k,\alpha,x)$ is the largest $r$ such that on the geodesic ball $B(x,r)$ there is a harmonic  coordinate chart $\Phi:U\to\R^n$, with $B(x,r)\subset U $, and coordinates $\xi=(\xi_1,...,\xi_n)=\Phi(y)$, with $\Phi(x)=0$ and such that the components $g_{ij}=g(\partial_{\xi_i},\partial_{\xi_j})$ of $g$ in such coordinates, seen as functions on $B(x,r)$, satisfy

\smallskip
\be g_{ij}(x)=\delta_{ij} \label{h1} \tag{H1}\ee

\be Q^{-1}\delta_{ij}\le g_{ij}\le Q \delta_{ij} \quad {\text{as bilinear forms}} \label{h2} \tag{H2}\ee

\be r^{|\beta|}\|\partial_\b g_{ij}\|_{C(B(x,r))}\le Q-1,\quad  1\le |\beta|\le k\label{h3} \tag{H3}\ee

\be r^{k+\a} |\partial_\b g_{ij}|_{C^{0,\a}(B(x,r))}\le Q-1,\quad  |\beta|= k\label{h4} \tag{H4}\ee

\smallskip
with the provision that if $\alpha=0$ condition \eqref{h4} is omitted, and if $k=0$ then condition \eqref{h3} is omitted and \eqref{h4} holds for $k=0$. In \eqref{h3}, \eqref{h4} we set for simplicity $C(B(x,r))=C(\Phi(B(x,r))$ and $C^{0,\a}(B(x,r))=C^{0,\a}(\Phi(B(x,r))$).

With the above notation the local Euclidean  behavior of the metric is captured as $Q\to1$, as well as the  fact that if for fixed $Q>1$, and some $x\in M$, we have  $\rh(Q,k,\alpha,x)=+\infty$ then the metric $g$ is flat.
Also, note that conditions \eqref{h1}-\eqref{h4} are invariant under dilation of $g$ (change $g$ to $\lambda^2 g$ then consider the chart $\lambda\Phi$, so the components of the new metric in that chart are $g_{ij}(\xi/\lambda)$, and the corresponding $\rh$ is $\lambda \rh$).

Note that the distance between $y,z\in B(x,\rh)$ under the metric $g$ is related to the Euclidean distance of the coordinates $\xi,\eta$ via
\be Q^{-\frac12}|\xi-\eta|\le d(y,z)\le Q^{\frac12}|\xi-\eta|\label{dist1}\ee
as it's clear from \eqref{h2}. As a consequence, we obtain that for any $ y\in B(x,\rh)$ and  $r<\rh-d(x,y)$ (in particular when $d(x,y)<\rh/2,\; r<\rh/2)$
\be B_{\sR^n}(\xi,rQ^{-\frac12})\subseteq \Phi\big(B(y,r)\big)\subseteq B_{\sR^n}(\xi,rQ^{\frac12}).\label{balls}\ee

\medskip
For fixed $Q>1$, the $C^{k,\alpha}$ \emph{harmonic radius of $M$} is defined as 
\be \rh=\rh(g,Q,k,\a)=\inf_{x\in M} \rh(g,Q,k,\a,x).\ee

A fundamental result is that given $k\in\Z, k\ge0$, $\alpha\in (0,1)$, $Q>1$, then under the assumptions
\be |\nabla ^j \ric|\le K,\qquad \inj(M)>0,\qquad j=0,1,..,k\ee
one can guarantee that  $\rh(g,Q,k+1,\a)>0$. For $k=0,1$ these are the classical results by \cite{ac}, \cite{and}. See \cite{hebey-herzlich}  Theorem 6 for the general case.

\begin{lemma} \label{volest} Assume $Q>1$, $\rh>0$, $k=0$ and $\alpha\in(0,1)$, or $k=1$ and $\alpha=0$. If $x\in M$ and 
\be0<\delta_H<\min\Big\{\rh Q^{-\frac12}, \big(n^2(Q-1)\big)^{-\frac1{k+\a}}\Big\}\label{deltah}\ee
 then in a harmonic chart where \eqref{h1}-\eqref{h4} hold with $r=\rh$ we have
 \be |g_{ij}(\xi)-\delta_{ij}|\le\rh^{-k-\a}(Q-1)|\xi|^{k+\a}.\label{gij10}\ee
 \be |g^{ij}(\xi)-\delta_{ij}|\le 2\rh^{-k-\a}(Q-1)|\xi|^{k+\a}.\label{gij20}\ee
\be\big|\sqrt{|g|(\xi)}-1\big|\le n^2 \rh^{-k-\a}(Q-1) |\xi|^{k+\a},\qquad |\xi|<\delta_H.\label{detg0}\ee
\end{lemma}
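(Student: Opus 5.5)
The three estimates will be proved in the order stated; each uses the previous one. The tools are the normalization \eqref{h1}, the bounds \eqref{h3}--\eqref{h4} at scale $r=\rh$, and elementary perturbation arguments for matrices.

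\emph{Estimate \eqref{gij10}.} In the case $k=0$, $\alpha\in(0,1)$, condition \eqref{h4} with $r=\rh$ and $\beta=0$ gives $\rh^{\alpha}|g_{ij}|_{C^{0,\a}}\le Q-1$. Since $g_{ij}(0)=\delta_{ij}$ by \eqref{h1}, we get
\[
|g_{ij}(\xi)-\delta_{ij}|=|g_{ij}(\xi)-g_{ij}(0)|\le \rh^{-\alpha}(Q-1)|\xi|^\alpha .
\]
In the case $k=1$, $\alpha=0$, condition \eqref{h3} gives $\|\partial_l g_{ij}\|_{C}\le (Q-1)\rh^{-1}$. By the mean value theorem along the segment from $0$ to $\xi$,
\[
|g_{ij}(\xi)-\delta_{ij}|\le |\xi|\,\sup|\nabla g_{ij}|\le \sqrt n\,(Q-1)\rh^{-1}|\xi| .
\]
To use the segment we need it to stay inside $\Phi(B(x,\rh))$. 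This holds because $|\xi|<\delta_H<\rh Q^{-1/2}$ and, by \eqref{balls}, $B_{\sR^n}(0,\rh Q^{-1/2})\subseteq\Phi(B(x,\rh))$. There is a spurious factor $\sqrt n$ from summing the $n$ partial derivatives. I would remove it by reading \eqref{h3} as a bound on the full gradient (directional derivative); if that reading is not available, the factor $\sqrt n$ can be absorbed into the dimensional constants downstream. I would flag this point.

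\emph{Estimate \eqref{gij20}.} Write $g=I+E$ with $\epsilon:=\rh^{-k-\a}(Q-1)|\xi|^{k+\a}$, so that $|E_{ij}|\le\epsilon$ entrywise. Then the operator norm satisfies $\|E\|\le n\epsilon$. The hypothesis $\delta_H<(n^2(Q-1))^{-1/(k+\a)}$ is presumably meant to give $n\epsilon\le 1/2$; as stated it gives $n^2(Q-1)\delta_H^{k+\a}<1$, which yields this after rescaling by $\rh$. This rescaling is a mismatch in the hypothesis and I would need to clarify it. With $\|E\|\le 1/2$, the Neumann series gives
\[
g^{-1}-I=-E(I+E)^{-1},\qquad \|(I+E)^{-1}\|\le 2 .
\]
Bounding entries then needs care. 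Each entry of $E(I+E)^{-1}$ is $\sum_l E_{il}\big((I+E)^{-1}\big)_{lj}$. A crude bound is $\sum_l|E_{il}|\cdot\big|\big((I+E)^{-1}\big)_{lj}\big|$; since each column of $(I+E)^{-1}$ has Euclidean norm at most $2$, Cauchy--Schwarz gives $\le \sqrt n\,\epsilon\cdot 2$. To get exactly $2\epsilon$ I would instead expand
\[
g^{-1}-I=-E+E^2-\cdots,
\]
bound $|(E^m)_{ij}|\le n^{m-1}\epsilon^m$, and sum to get $\epsilon/(1-n\epsilon)\le 2\epsilon$ when $n\epsilon\le1/2$.

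\emph{Estimate \eqref{detg0}.} Expand $\det(I+E)$ as a sum over permutations. Each product of $m$ off-identity factors is bounded by $\epsilon^m$, so $|\det(I+E)-1|\le(1+\epsilon)^n-1$, or more precisely $\le\sum_{m\ge1}\binom nm m!\,\epsilon^m$-type terms. Under the smallness $n^2\epsilon\le1$ this is $\le Cn\epsilon$ with a modest constant $C$. Then, since $\sqrt{1+t}-1\le |t|$ for $t\ge -1$, the bound $|\sqrt{|g|}-1|\le n^2\epsilon$ follows, provided the constant bookkeeping works out. For that I would use the bound $|\det(I+E)-1|\le e^{n\cdot n\epsilon}-1\le 2n^2\epsilon$, obtained from Hadamard-type row estimates, and halve it via the square root.

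\emph{Main obstacle.} The only delicate part is getting the exact constants $1$, $2$, $n^2$ and making the smallness condition \eqref{deltah} fit those constants.
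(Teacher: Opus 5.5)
Your proof is correct, modulo the defects in the statement that you flagged yourself. The matrix estimates, however, follow a different route from the paper's.

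\textbf{Comparison with the paper.} The paper argues spectrally. By Weyl's inequalities every eigenvalue of $G=I+H$ lies in $[1-nh,1+nh]$, so $(1-nh)^{n/2}\le\sqrt{\det G}\le(1+nh)^{n/2}$, and a Bernoulli inequality gives the factor $n^2$ at once. For the inverse it uses the operator-norm bound $\|(I+H)^{-1}-I\|_2\le nh/(1-nh)$ and controls each entry by that norm. You work entrywise instead, using the Neumann series with $|(E^m)_{ij}|\le n^{m-1}\epsilon^m$ and the principal-minor expansion $\det(I+E)=\sum_T\det E[T]$.

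\begin{itemize}
\item For the inverse, your entrywise series is what actually delivers the constant in \eqref{gij20}: it gives $\epsilon/(1-n\epsilon)\le 2\epsilon$. The paper's operator-norm argument only yields $2nh$, and the paper then claims \eqref{gij20} anyway.
\item For the determinant, the spectral route is shorter, while yours gives the sharper bound $2n\epsilon$.
\end{itemize}

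\textbf{Fixes needed in your determinant paragraph.}
\begin{itemize}
\item The claim $|\det(I+E)-1|\le(1+\epsilon)^n-1$ is false. A subset of $m$ indices carries up to $m!$ products, not one. For $n=2$ with $E_{11}=E_{12}=E_{22}=\epsilon$ and $E_{21}=-\epsilon$, one gets $\det(I+E)-1=2\epsilon+2\epsilon^2$.
\item Your ``more precise'' bound is the right one, and it closes the bookkeeping directly. When $n^2\epsilon\le1$, $\sum_{m\ge1}\frac{n!}{(n-m)!}\epsilon^m\le\frac{n\epsilon}{1-n\epsilon}\le 2n\epsilon$. Then $|\sqrt{1+t}-1|\le|t|$ gives $2n\epsilon\le n^2\epsilon$ for $n\ge2$.
\item Drop the Hadamard/exponential alternative. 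Hadamard's inequality bounds $|\det|$, not $|\det-1|$. Also, ``halving via the square root'' fails for $t<0$, where $|\sqrt{1+t}-1|=|t|/(1+\sqrt{1+t})>|t|/2$.
\end{itemize}

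\textbf{The issues you flagged.} Both are genuine, and the paper's proof passes over them.
\begin{itemize}
\item Its $h=(Q-1)|\xi|^{k+\alpha}$ silently drops the factor $\rh^{-k-\alpha}$. So the smallness $n^2\epsilon<1$ follows from the hypothesis only after normalizing $\rh\ge1$, or under $\delta_H<\rh\,(n^2(Q-1))^{-1/(k+\alpha)}$.
\item Its appeal to the mean value theorem, with the coordinatewise bounds in \eqref{h3}, does produce your factor $\sqrt n$ when $k=1$.
\end{itemize}
Your use of $B_{\R^n}(0,\rh Q^{-1/2})\subseteq\Phi(B(x,\rh))$ to keep the segment inside the chart is exactly the role of the first term in the minimum defining $\delta_H$.
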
 

\ni{\bf Proof.} Estimate \eqref{gij10} follow from the mean value theorem when $k=1,\;\alpha=0$, and from the definition of $|\cdot|_{C^{0,\a}}$ when $k=0,\; \a\in(0,1)$.

Considering the matrices $G=(g_{ij})$ and $H=(h_{ij})$ where $|h_{ij}|\le h= (Q-1)|\xi|^{k+\a}$, then $G=I+H$, so Weyl's Theorem (see \cite{horn-johnson}  Theorem 4.3.1]) implies that 
\be\lambda_k(I)+\lambda_1(H)\le \lambda_k(G)\le \lambda_k(I)+\lambda_n(H),\qquad k=1,2,...,n\ee
where $\lambda_k(A)$ denotes the $k-$th largest eigenvalue of a symmetric matrix $A$. It's easy to see that  $\lambda_1(H)\ge -nh$ and $\lambda_n(H)\le nh$
which gives
\be (1-nh)^{n/2}\le \sqrt{\det G}\le (1+nh)^{n/2}.\ee
Estimate \eqref{detg0} follows from the inequalities 
\be1-\frac a 2{|x|}\le (1+x)^a\le 1+2a |x| , \qquad |x|<\frac1 a,\; a>1\ee
with $a=n/2$, and $x=nh<1/n$, together with \eqref{dist1}.

Finally, by  a standard result in matrix theory (see 
\cite{horn-johnson}  (5.8.2)])
\be\max_{ij}|g^{ij}-\delta_{ij}|\le  \|(I+H)^{-1}-I\|_2\le\frac{\|H\|_2}{1-\|H\|_2}=\frac {nh}{1-nh}\le 2nh\ee 
(since $h<1/n^2\le 1/(2n))$, which gives \eqref{gij20}.

\rightline\qed

\smallskip
\begin{theorem}\label{druet} Assume $|\ric|\le K$ and $\inj(M)>0$. Then, there exist $B>0$ and $r_0>0$ such that for any $x\in M$ and  all $u\in W_0^{1,1}(B(x,r_0))$
\be \|u\|_\nnn\le K(n,1)\big(\|\nabla u\|_1+B\|u\|_1\big).\label{sobdruet}
\ee
\end{theorem}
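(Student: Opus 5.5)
Under \eqref{ric} I fix $Q$ close to $1$ (say $Q=2$), $k=1$, $\alpha\in(0,1)$, so that the $C^{1,\a}$ harmonic radius $\rh=\rh(g,Q,1,\a)$ is positive; in particular the $C^{1,0}$ conditions \eqref{h1}--\eqref{h3} hold with $r=\rh$. Fix $x\in M$ and a harmonic chart $\Phi$ at $x$. By Lemma \ref{volest} with $k=1$, $\a=0$, I choose $r_0$ with $r_0Q^{\frac12}<\dh$. Then on $\Phi(B(x,r_0))\subseteq B_{\sR^n}(0,\dh)$ the estimates \eqref{gij10}--\eqref{detg0} give
\[
|g_{ij}-\delta_{ij}|+|g^{ij}-\delta_{ij}|+\big|\sqrt{|g|}-1\big|\le C_n\rh^{-1}|\xi|.
\]
The point is that every constant depends only on $n$, $Q$, $\rh$, and not on $x$.

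By density it suffices to take $u\in C_c^\infty(B(x,r_0))$, and I write $\tilde u=u\circ\Phi^{-1}$. I compare each norm with its Euclidean counterpart. On the left,
\[
\|u\|_{\nnn}\le \big(1+C\rh^{-1}\sup|\xi|\big)^{\nn}\|\tilde u\|_{\nnn,\sR^n}
\]
only gives a multiplicative error $1+O(r_0)$, which is useless. The left side must therefore be handled exactly, and I absorb the weight into the function. Let $w=\sqrt{|g|}^{\,\nn}$ and apply the sharp Euclidean $W^{1,1}$ Sobolev inequality to $\tilde u\,w$:
\[
\|u\|_{\nnn}=\|\tilde u\,w\|_{L^{\nnn}(\sR^n)}\le K(n,1)\int|\nabla(\tilde u w)|\,d\xi\le K(n,1)\Big(\int|\nabla\tilde u|\,w\,d\xi+\int|\tilde u|\,|\nabla w|\,d\xi\Big).
\]
Since $|\nabla w|\le C_n\rh^{-1}$ by \eqref{h3}, the last term is at most $C\rh^{-1}\int|\tilde u|\,d\xi\le C'\rh^{-1}\|u\|_1$, using $\sqrt{|g|}\ge\frac12$. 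This is of the form $B\|u\|_1$.

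The main obstacle is the gradient term. I need
\[
\int|\nabla\tilde u|_{\rm eucl}\,w\,d\xi\le\int|\nabla u|_g\sqrt{|g|}\,d\xi+B\|u\|_1,
\]
but pointwise only $|\nabla\tilde u|_{\rm eucl}\le(1+C|\xi|)|\nabla u|_g$ and $w\le\sqrt{|g|}(1+C|\xi|)$ hold. This leaves an error $C\rh^{-1}\int|\xi|\,|\nabla u|_g\,d\mu$, which is not directly bounded by $\|u\|_1$. Following Druet's argument, I plan to handle it by integrating by parts against the radial field: $|\xi|\,|\nabla u|$ is controlled via
\[
\int|\xi|\,|\nabla u|=\int|\nabla(|\xi|u)|-\text{(lower order)},
\]
or more robustly by a coarea/level-set argument. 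Applying the inequality to each $\chi_{\{u>t\}}$ reduces everything to sets $E$ with $\vol(E)$ small inside $B(x,r_0)$. For such sets I need
\[
\per_{\rm eucl}(E)\le \per_g(E)+C\rh^{-1}\int_{\p E}|\xi|,
\qquad
\int_{\p E}|\xi|\le r_0\,\per_g(E).
\]
This gives $\per_{\rm eucl}\le(1+Cr_0)\per_g$, which is still multiplicative. The real fix I would try first is to choose the harmonic chart centered at a point of $E$ that is not fixed a priori. Alternatively I would use Druet's trick of replacing the Euclidean isoperimetric inequality with the divergence theorem applied to the field $X=\xi/n$. Then $\operatorname{div}_g X=1+O(\rh^{-1}|\xi|)$ because the chart is harmonic, so $\Delta_g\xi_i=0$ and $\operatorname{div}_g X=\frac1n\sum g^{ij}\p_i\xi_j\cdot$(trace) $=1+O(|\xi|)$. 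This yields $|E|\le\frac1n\int_{\p E}|X|_g+C\rh^{-1}\int_E|\xi|$. Combined with a symmetrization in the chart, it should produce the sharp constant with an additive $B\,|E|$ error, exactly the form required. Summing over level sets via the coarea formula then gives \eqref{sobdruet} with $B=c_n\rh^{-1}$.
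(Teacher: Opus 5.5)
\textbf{There is a genuine gap at the central step.} Your preliminary reductions are sound. Applying the sharp Euclidean inequality to $\tilde u\,w$ with $w=\sqrt{|g|}^{\,(n-1)/n}$ does remove the volume weight at a cost of $C\rh^{-1}\|u\|_1$. By the coarea formula it is also enough to prove $\per(E)\ge nB_n^{1/n}\vol(E)^{(n-1)/n}-B\,\vol(E)$ for $E\subseteq B(x,r_0)$.

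The whole content of the theorem, however, is the remaining error $C\rh^{-1}\int|\xi|\,|\nabla u|_g\,d\mu$ (for sets, $C\rh^{-1}\int_{\partial E}|\xi|$). None of your proposed fixes bounds it by $\|u\|_1$ (resp. $\vol(E)$):
\begin{itemize}
\item In a fixed chart no such bound exists. For the indicator of a ball of radius $\epsilon$ at distance $\sim r_0$ from the chart's center, the error is of order $\rh^{-1}r_0\epsilon^{n-1}$, while $\|u\|_1\sim\epsilon^n$.
\item The identity $\nabla(|\xi|u)=|\xi|\nabla u+u\,\xi/|\xi|$ only trades $\int|\xi||\nabla u|$ for $\int|\nabla(|\xi|u)|$, which is no smaller.
\item Recentering the chart at a point of $E$ helps only when $\partial E$ lies within $O(\vol(E)^{1/n})$ of that point. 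A ball with long thin tentacles, for example, is not like this, and you give no way to reduce to such concentrated sets.
\item The divergence identity with $X=\xi/n$ only gives $\vol(E)\le\frac Rn\per(E)+\dots$ for $E\subseteq\{|\xi|\le R\}$. This is sharp for balls centred at the origin of the chart but does not produce the constant $nB_n^{1/n}$ for general $E$.
\item Symmetrizing in the chart reintroduces the multiplicative factor $1+O(\rh^{-1}r_0)$.
\end{itemize}
(Incidentally, $\operatorname{div}_gX=1+O(\rh^{-1}|\xi|)$ has nothing to do with harmonicity; it holds in any chart with $C^1$ control of $\sqrt{|g|}$.) As written, your argument proves only the non-sharp inequality with constant $K(n,1)(1+C\rh^{-1}r_0)$.

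\textbf{The missing idea is a concentration mechanism.} You need to restrict to near-extremal functions, which are concentrated and for which the error really is of the same order as the gain $B\|u\|_1$. Non-concentrated functions must be absorbed by the slack in the Euclidean inequality.

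The paper obtains this by contradiction and blow-up. Suppose no $r_0$ works for a given $B$. The paper then:
\begin{itemize}
\item takes minimizers $u_p$ of the $W^{1,p}$ functional with $p\searrow1$;
\item rescales them in harmonic coordinates at their maximum points by $\mu_p\to0$, and shows $\tilde v_p\to\chi_{B(\xi_0,R_0)}$;
\item proves the decay \eqref{decay} via Moser iteration and comparison with $|\xi|^{-(n-p-b)/(p-1)}$;
\item tests the Euler--Lagrange equation against $|\xi|^pv_p$ to bound $\int|\xi|\,|\nabla_{g_p}\tilde v_p|_{g_p}\,dV_{g_p}$ uniformly.
\end{itemize}
Only then is the error $O(\mu_p)$, comparable to the gain $B\mu_p\|v_p\|_{p,g_p}$, and this forces \eqref{B}.

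Alternatively, your recentering idea might be rescued as follows. Replace each level set $E$ by an isoperimetric region of the same volume, using $\per(E)\ge\im(\vol(E))$. Then import the existence results (on $M$ or on a $C^{1,\alpha}$ pointed limit) and the diameter bound \eqref{diam-vol} used in the proof of Theorem \ref{expansion}. Nothing of this kind appears in your proposal.
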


\smallskip
\begin{rk} From the proof below it will be apparent that we can take  $B=C(n,Q) r_H^{-1}$, for any fixed $Q>1$ and for some explicit $C(n,Q)$, depending only on $n$ and $Q$.  
\end{rk}

\smallskip

\begin{cor} \label{druet1} Assume $|\ric|\le K$ and $\inj(M)>0$. Then, there exists $B>0$ and $r_0>0$ such that for any set $\Omega\subseteq M$ of finite perimeter and with $\diam (\Omega)\le r_0$ we have
\be \per(\Omega)\ge nB_n^{\frac1n} \vol(\Omega)^{\frac{n-1}n}-B\;\vol(\Omega).
\label{isop1}\ee
\end{cor}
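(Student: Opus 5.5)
We apply Theorem \ref{druet} to the characteristic function $\chi_\Omega$, using the usual BV approximation argument to pass from $W^{1,1}$ to characteristic functions.

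Let $B, r_0'$ be the constants of Theorem \ref{druet}, and set $r_0 = r_0'/2$. Let $\Omega$ have finite perimeter and $\diam(\Omega)\le r_0$. If $\vol(\Omega)=0$ the statement is trivial. Otherwise pick any $x\in\Omega$ (a point of density, say). Then $\Omega\subseteq \overline{B(x,r_0)}$, so $\Omega$ is compactly contained in $B(x,r_0')$.

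We approximate $\chi_\Omega$ by smooth functions. Since $\chi_\Omega\in BV(M)$ has compact support inside $B(x,r_0')$, a standard mollification argument gives a sequence $u_k\in C_c^\infty(B(x,r_0'))$, $0\le u_k\le 1$, with the following properties:
\begin{itemize}
\item $u_k\to\chi_\Omega$ in $L^1$ and almost everywhere;
\item $\|\nabla u_k\|_1\to |D\chi_\Omega|(M)=\per(\Omega)$ (Anzellotti–Giaquinta type strict approximation).
\end{itemize}
On a manifold the mollification is done in a chart: for $r_0'$ below the harmonic radius, $B(x,r_0')$ lies in one coordinate chart, and there the strict convergence of total variation holds for the Riemannian total variation too, since the weight $g^{ij}\sqrt{|g|}$ is continuous. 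Alternatively, one can cite density of smooth functions in BV with strict convergence, as in \cite{mfn}.

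Next we pass to the limit in \eqref{sobdruet}. Apply the inequality to each $u_k$:
\[
\|u_k\|_{\frac n{n-1}}\le K(n,1)\big(\|\nabla u_k\|_1+B\|u_k\|_1\big).
\]
By Fatou's lemma, the left side has $\liminf$ at least $\|\chi_\Omega\|_{\frac n{n-1}}=\vol(\Omega)^{\frac{n-1}n}$. The right side converges to $K(n,1)\big(\per(\Omega)+B\vol(\Omega)\big)$. Hence
\[
\vol(\Omega)^{\frac{n-1}n}\le K(n,1)\big(\per(\Omega)+B\vol(\Omega)\big).
\]
Since $K(n,1)^{-1}=nB_n^{\frac1n}$, this rearranges to
\[
\per(\Omega)\ge nB_n^{\frac1n}\vol(\Omega)^{\frac{n-1}n}-B\vol(\Omega),
\]
which is \eqref{isop1} with the same constant $B$.

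The only delicate point is the strict approximation in the second step. It requires the approximants to stay supported in $B(x,r_0')$, which is why we halve the radius, and it requires that the total variation does not increase in the limit. Both are standard once we work inside a single harmonic chart.
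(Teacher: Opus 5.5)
Your proof is correct and takes essentially the same approach as the paper. You approximate $\chi_\Omega$ strictly in $BV$ by functions in $C_c^\infty$ of a ball where Theorem \ref{druet} applies, apply \eqref{sobdruet} to them, and pass to the limit using $K(n,1)^{-1}=nB_n^{1/n}$; your choice $r_0=r_0'/2$, which puts $\Omega$ in $\overline{B(x,r_0)}$ and hence compactly inside $B(x,r_0')$, secures the support condition more cleanly than the paper's claim that $\Omega$ lies in a ball of radius $\frac23 r_0$ with the same $r_0$.
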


The proof of this corollary is immediate from Theorem \ref{druet}, since such $\Omega$ must be included in some ball $B(x_0,{\frac23}r_0)$ of radius $\frac23r_0$, up to a set of 0 measure, and, by standard results, one can approximate $\chi_\Omega$ by a sequence $u_k\in C_c^\infty(B(x_0,r_0))$, so that $\|u_k\|_\nnn\to \vol(\Omega)^\nn$, and $\|\nabla u_k\|_1\to \per(\Omega).$ Applying \eqref{sobdruet} to the $u_k $ and passing to the limit yields \eqref{isop1} (recall that $n B_n^{\frac1n}=K(n,1)^{-1}$).

\smallskip
\ni{\bf Proof of Theorem \ref{druet}.}  For $p\ge 1$, $B>0$, $r>0$, and $x\in M$, let
\be \lambda_{p,r}(x)=\lambda_{p,r,g}(x)=\inf_{u\in W_0^{1,1}(B_g(x,r))\atop u\not\equiv 0}\frac{\|\nabla u\|_p+B \|u\|_p}{\|u\|_q}\label{lambda}\ee

Fix any $B>0$, and assume that there is no $r_0>0$   so that the conclusion of Theorem~\ref{druet} is true. We will then derive an explicit upper bound on $B$ depending on $n$ and $\rh$, see \eqref{B}.
Arguing  as in   \cite{druet-pams}, \cite{noa-arxiv},  for all $r>0$  there is $y_r\in  M$ such that 
\be \lambda_{1,r}(y_r)<K(n,1)^{-1}.\ee

Since for fixed $r$, and any $x$,  we have $\limsup_{p\to1}\lambda_{p,r}(x)\le \lambda_{1,r}(x)$, then picking $r_k\searrow 0$,  there is $p_k\searrow 1$ such that for each $k$ large enough 
\be \limsup_{p\to 1}\lambda_{p,r_k}(y_{r_k}^{})-\frac1k<\lambda_{p_k,r_k}(y_{r_k}^{})<K(n,1)^{-1}\ee

For the  rest of this proof  ``$p$'' will denote an element of the original  sequence $p_k\searrow 1,$ or a subsequence of it, and   the notation $p\to1$ will always mean ``up to a subsequence of $\{p_k\}$''. To be consistent with the notation used in \cite{druet-pams} and \cite{noa-arxiv}, given any subsequence of $\{p_k\}$ we will use $r_p$ to denote the corresponding subsequence  of $r_k\searrow 0$.

\smallskip
We will let 

\be\lambda_p=\ \lambda_{p,r_p}(y_p)\ee

and we can assume that as $p\to 1$

\be \lambda_p< K(n,1)^{-1} \frac{n-p}{p(n-1)}\label{L1}\ee
and, as a consequence,
\be \lambda_p<K(n,p)^{-1}.\label{L2}\ee

\medskip
The fact that the right hand side of \eqref{L1} is smaller than $K(n,p)^{-1}$ was implicitly used in \cite{druet-pams} and \cite{noa-arxiv} without proof, but it follows ``easily'' from 
\be K(n,p)=(p-1)^{1-\frac1p}n^{1-\frac1p}(n-p)^{-1+\frac1p}\bigg[\frac{\Gamma(n)}{\Gamma\big(\frac np\big)\Gamma\big(n+1-\frac np\big)}\bigg]^{\frac1n}\;K(n,1)\label{knp}\ee
combined with the following estimate for the ratio of gamma functions (see e.g. \cite{gm})
\be\frac{\Gamma(x+y+1)}{\Gamma(x+1)\Gamma(y+1)}\le \frac{(x+y)^{x+y}}{x^x y^y},\qquad x,y>0\ee
with $x=\frac np-1.\; y=n-\frac np$.

\begin{rk}\label{kn1p}
We  note that it is alternatively possible to deduce \eqref{L2}, for $p$ close enough to 1, from $\lambda_p<K(n,1)^{-1}$ and  the expansion 
\be \frac{K(n,p)}{K(n,1)}=1+(p-1)\big(\log(p-1)+O(1)\big),\ee
(where $|O(1)|\le C_n$ if $p$ close enough to 1),  and which can be ``easily'' derived from \eqref{knp}.

\end{rk}
\medskip

 From \eqref{L2} using \cite{aubin-li}  Prop. 8.1, combined with regularity results (see for ex. \cite{druet-prse}, Theorem 2.3)  we have the existence of a minimizer $u_p$ (of the functional in \eqref{lambda}) such that

\be u_p\in C^{1,\eta}\big(B(y_p,r_p)\big), \quad {\text {some} }\; \;\eta>0\ee

\be u_p>0 \;\;{\text {on} }\; \;B(y_p,r_p),\qquad u_p=0 \;\;{\text {on} }\;\;  \p B(y_p,r_p)\ee

\be -\Delta_p u_p+B\|u_p\|_p^{1-p}\|\nabla u_p\|_p^{p-1}u_p^{p-1}=\lambda_p\|\nabla u_p\|_p^{p-1}u_p^{q-1}\label{E1}\ee

\be \|u_p\|_q=1,\label{u1}\ee

where $\Delta_p={\rm {div} }\big(|\nabla u|^{p-2}\nabla u\big)$ denotes the p-Laplacian in the metric $g$.

Let $x_p$ be the maximum point for $u_p$, and define $\mu_p$ as follows

\be\mu_p^{1-n/p}:= u(x_p)=\max u_p.\ee

We summarize here some basic properties that we will use later:

\begin{prop}\label{properties1} In the above notation the following hold:

\be \lim_{p\to1}\mu_p=0,\label{p1}\ee

\be \lim_{p\to1} \|u_p\|_p=0,\label{p2}\ee

\be \lim_{p\to1}\lambda_p=\lim_{p\to1}\lambda_{p,r_p}(y_p)=K(n,1)^{-1},\label{p3}\ee

\be \lim_{p\to1} \|\nabla u_p\|_p=K(n,1)^{-1}.\label{p4}\ee

\end{prop}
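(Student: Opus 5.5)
The strategy is to obtain the four limits \eqref{p1}--\eqref{p4} by combining two facts: the upper bound $\lambda_p<K(n,1)^{-1}$ from \eqref{L1}, and a lower bound coming from the near-Euclidean Sobolev inequality on very small balls, together with the normalization \eqref{u1}.

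\textbf{Lower bound and \eqref{p3}.} Since $r_p\to0$, the balls $B(y_p,r_p)$ lie inside harmonic charts with $|\xi|\lesssim r_p$. By Lemma \ref{volest}, on such a ball
\[
\sqrt{|g|}=1+O(r_p^{k+\a}), \qquad g^{ij}=\delta_{ij}+O(r_p^{k+\a}).
\]
Hence, for $w\in W_0^{1,p}(B(y_p,r_p))$,
\[
\|w\|_q\le (1+o(1))K(n,p)\|\nabla w\|_p ,
\]
where the $o(1)$ is uniform in $y_p$, because $\rh$ and $Q$ are uniform. Applying this to $u_p$ and using \eqref{u1}:
\[
\|\nabla u_p\|_p\ge (1-o(1))K(n,p)^{-1}.
\]
From \eqref{knp} (or Remark \ref{kn1p}), $K(n,p)\to K(n,1)$ as $p\to1$. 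Therefore
\[
\lambda_p=\|\nabla u_p\|_p+B\|u_p\|_p\ge (1-o(1))K(n,1)^{-1}.
\]
Combined with $\lambda_p<K(n,1)^{-1}\frac{n-p}{p(n-1)}\to K(n,1)^{-1}$, this gives \eqref{p3}.

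\textbf{\eqref{p4} and \eqref{p2}.} The same chain of inequalities gives
\[
K(n,1)^{-1}(1-o(1))\le\|\nabla u_p\|_p\le \lambda_p-B\|u_p\|_p\le\lambda_p .
\]
Thus \eqref{p4} follows, and also $B\|u_p\|_p\le \lambda_p-\|\nabla u_p\|_p\to0$. Since $B>0$, this yields \eqref{p2}. An independent check of \eqref{p2}: Hölder's inequality gives $\|u_p\|_p\le \vol(B(y_p,r_p))^{1/p-1/q}\|u_p\|_q=\vol(B(y_p,r_p))^{1/n}\to0$, since $r_p\to 0$ and volumes of small balls are uniformly controlled by \eqref{balls}.

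\textbf{\eqref{p1}.} From \eqref{u1},
\[
1=\int u_p^q\,d\mu\le \max u_p^{\,q}\,\vol(B(y_p,r_p)),
\]
so $\max u_p\ge \vol(B(y_p,r_p))^{-1/q}\to\infty$. Because $1-n/p<0$ for $p$ near $1$, the relation $\mu_p^{1-n/p}=\max u_p\to\infty$ forces $\mu_p\to0$. Some care is needed here: $1-n/p\to 1-n$ stays bounded away from $0$, so $\mu_p=(\max u_p)^{-p/(n-p)}\to0$ indeed holds.

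\textbf{Main obstacle.} The main delicate step is the uniformity of the near-Euclidean Sobolev constant on the shrinking balls. One must transfer the Euclidean sharp inequality through the harmonic chart, and the factors $Q^{\pm}$ from \eqref{h2} must be replaced by the sharper $1+O(r_p^{k+\a})$ bounds of Lemma \ref{volest}. This works because $|\nabla w|_g^p\ge (1-Cr_p^{k+\a})|\nabla w|_{e}^p$, with the measure distorted by the same factor, and $C$ independent of $p$ and of the center $y_p$.
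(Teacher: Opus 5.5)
Your proof is correct, but for the key limit \eqref{p3} it takes a different route from the paper. On \eqref{p1} and the H\"older check of \eqref{p2} you agree with the paper: both use $1\le \vol(B(y_p,r_p))\mu_p^{-n}$ and $\|u_p\|_p\le \vol(B(y_p,r_p))^{1/n}$. For \eqref{p3}, the paper quotes the global almost-sharp inequality $\|v\|_{\frac n{n-1}}\le (K(n,1)+\e)\|\nabla v\|_1+B_\e\|v\|_1$ (Hebey, Theorem 7.1). It applies this to $v=u_p^{\frac{p(n-1)}{n-p}}$, where H\"older converts $\|\nabla v\|_1$ into at most $\frac{p(n-1)}{n-p}\|\nabla u_p\|_p$, and \eqref{p2} makes the $B_\e$ term vanish. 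You instead prove directly an almost-Euclidean $W^{1,p}$ inequality with constant $(1+O(r_p))K(n,p)$ on the shrinking balls, using a harmonic chart centered at $y_p$ and Lemma \ref{volest}, and then use $K(n,p)\to K(n,1)$. As a result you do not need \eqref{p2} as an input, and you recover it afterwards from $B\|u_p\|_p=\lambda_p-\|\nabla u_p\|_p\to0$.

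Your version is self-contained: it essentially reproves, in the easy local regime $r_p\to 0$, what Hebey's theorem supplies globally. It also gives the quantitative bound $\|\nabla u_p\|_p\ge (1-Cr_p)K(n,p)^{-1}$. The cost is that you need continuity of $K(n,p)$ at $p=1$, which follows from \eqref{knp} as you note. The paper's version involves only the constant $K(n,1)$, and its power-substitution device is reused later in \eqref{sob1} and \eqref{2}. Incidentally, your H\"older exponent $\frac1p-\frac1q=\frac1n$ is the correct one; the paper's display has $1-\frac pq$, which is the exponent for $\|u_p\|_p^p$.
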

\ni{\bf Proof of Proposition 1.}
Properties \eqref{p1}, \eqref{p2} are easy consequences of 
\be 1=\|u_p\|_q^q\le \vol(B(y_p,r_p))\mu_p^{-n},\ee
\be \|u_p\|_p\le \|u\|_q \vol(B(y_p,r_p))^{1-p/q}= \vol(B(y_p,r_p))^{1-p/q}.\ee
For \eqref{p3} use that for each $\e>0$ there is $B_\e>0$ s.t. for each $v\in W^{1,1}(M)$
\be \|v\|_{\frac n{n-1}}\le (K(n,1)+\e)\|\nabla v\|_1+B_\e\|v\|_1\ee
(\cite{hebey-courant} Theorem 7.1).
Apply the above  to $v=u_p^{\frac {p(n-1)}{n-p}}=u^{1+\frac{n(p-1)}{n-p}}$ so that
\be\ba 1&=\|u_p\|_q\le (K(n,1)+\e)\frac{p(n-1)}{n-p} \int_M|\nabla u_p| u_p^{\frac{n(p-1)}{n-p}}d\mu+B_\e\|u_p\|_p\cr&\le  (K(n,1)+\e)\frac{p(n-1)}{n-p}\|\nabla u_p\|_p+B_\e\|u_p\|_p\cr&=(K(n,1)+\e)\frac{p(n-1)}{n-p}(\lambda_p-B\|u_p\|_p)+B_\e\|u_p\|_p,\ea\label{eq1}\ee
where we used H\"older's inequality, \eqref{u1},    (from \eqref{E1})
\be \|\nabla u_p\|_p+B\|u_p\|_p=\lambda_p.\label{eq2}\ee
Clearly \eqref{eq1} and \eqref{p2} imply $\liminf_{p\to1} \lambda_p(K(n,1)+\e)\ge1$ for all $\e$, hence $\liminf \lambda_p\ge K(n,1)^{-1}$, which gives \eqref{p3} using \eqref{L1} (or \eqref{L2}).

Property\eqref{p4} follows from \eqref{eq2} and the result just obtained. 

\qed

\medskip
Now we proceed with the  classical construction of the blowup at the maximum points $x_p$, but  in harmonic coordinates rather than the usual normal coordinates. Fix  $Q=4$ (for simplicity), and  let $\Phi_p:B(x_p,\rh)\to \R^n$ be a harmonic chart satisfying \eqref{h1}-\eqref{h4}, with $k=1, \alpha=0$, and with $\rh>0$.  The existence of such chart is guaranteed by the aforementioned classical result by Anderson \cite{and}. Denote the coordinates in the chart by  $\xi=\Phi_p(y)$, with $\Phi_p(x_p)=0$, and  let 
(with some  abuse of language) $u_p(\xi)$ be $u_p(\Phi_p^{-1}(\xi))$, and $g_{ij}^{(p)}(\xi)$ be the components of $g$ in the coordinate chart, where $\xi\in \Phi_p(B(x_p,\rh)).$ 

From \eqref{balls} we have
  \be B_{\sR^n}(0,\half \rh)\subseteq \Phi_p\big(B(x_p,\rh)\big)\subseteq B_{\sR^n}(0,2\rh).\label{balls0}\ee
If  $g_{(p)}^{ij}$ denote the component of the inverse matrix of $\{g_{ij}^{(p)}\}$ and if  
\be0<\dh<\min\Big\{\frac\rh 2, (3n^2)^{-1}\Big\}\label{deltah}\ee
 then, \eqref{h1}-\eqref{h3} and  Lemma \ref{volest} give,  for $|\xi|<\dh$, 
 
\be g_{ij}^{(p)}(0)=\delta_{ij}, \ee
\be \frac14\delta_{ij}\le g_{ij}^{(p)}\le 4 \delta_{ij} \quad {\text{as bilinear forms,}}\label{h2a}  \ee
\be |\partial_k g_{ij}^{(p)}(\xi)|\le 3\rh^{-1},\label{gij0} \ee
  \begin{align} |g_{ij}^{(p)}(\xi)-\delta_{ij}|&\le3\rh^{-1}|\xi|,\label{gij1}\\
 |g_{(p)}^{ij}(\xi)-\delta_{ij}|&\le 6\rh^{-1}|\xi|,\label{gij2}\\
\big|\sqrt{|g_{(p)}|(\xi)}-1\big|&\le3 n^2 \rh^{-1} |\xi|,\label{detg}\end{align}
where $|g_{(p)}|=\det\big(g_{ij}^{(p)}\big)$.  Additionally, if we put the metric $g_{(p)}$ on $B_{\sR^n}(0,\frac12\rh)$, then \eqref{balls} becomes
\be B_{\sR^n}(\xi,\half r)\subseteq B_{g_{(p)}}(\xi,r)\subseteq B_{\sR^n}(\xi,2r),\qquad |\xi|<\dh,\; r\le\dh\label{balls1}\ee
where  $B_{g_{(p)}}$ denotes a ball in the metric $g_{(p)}$.

\medskip

 For $p$ close enough to 1 we let
\be \Omega_p=\mu_p^{-1} \Phi_p(B(y_p,r_p)).\ee
Clearly (recall that $d(x_p,y_p)<r_p\to0$)
\be \Omega_p\subseteq \mu_p^{-1} \Phi_p(B(x_p,2r_p))\subseteq B_{\sR^n}(0,4 \mu_p^{-1}r_p)\subseteq B_{\sR^n}(0,\mu_p^{-1}\dh).\label{omegap0}\ee
 Let 
(with some  abuse of language) $u_p(\xi)$ be $u_p(\Phi_p^{-1}(\xi))$, for $\xi\in \Phi_p(B(y_p,r_p))$, and 
\be v_p(\xi)=\bc\mu_p^{{\frac np}-1}u_p(\mu_p\xi ) & {\text{ if  }}\xi\in \Omega_p\\ 0 & {\text{ if }} \xi\in \R^n\setminus \Omega_p.\ec\ee

On $B_{\sR^n}(0,\mu_p^{-1}\dh)$, in particular on $\Omega_p$,  we consider the metric defined on $M$ as $\gp=\mu_p^{-2}g$, which in the coordinate chart given by $\mu_p^{-1}\Phi_p$,   is simply the tensor with components $g_{ij}^{(p)}(\mu_p\xi)$.  In the sequel $dV=dV_g$ denotes the volume element in the original metric $g$ on $M$, while the volume element in the metric $\gp$  on $\Omega_p$ will be denoted as
\be dV_\gp(\xi)=\sqrt{|g^{(p)}|(\mu_p \xi)}\;d\xi\label{dvp},\qquad |g^{(p)}|=\det\big(g_{ij}^{(p)}\big).\ee

\smallskip
\begin{rk}\label{amb}
In order to minimize notation a bit we will only emphasize the role of the metric $\gp$ when necessary. For 
example $\|\cdot\|_{p,\gp}$ will denote the $L^p$ norm of a functions defined on $\Omega_p$  with respect to the measure in \eqref{dvp} while $\|u_p \|_p$  will continue to denote the norm of $u_p$ w.r. to the original metric $g$. Similarly, for a function $v$ defined on $\Omega_p$, the notation $|\nabla_\gp v|_{\gp}$  will mean that the gradient and the inner product are w.r. to $\gp$, while $|\nabla v|$ is just the Euclidean norm of the Euclidean gradient.
\end{rk}

\smallskip

Note that  if  $\xi \in\Omega_p$ then  $\mu_p\xi\in\Phi_p\big( B(y_p,r_p)\big)\subseteq\Phi_p\big( B(x_p,2r_p)\big)$, so, using \eqref{balls}, 
\be\mu_p|\xi|\le 2 d(x_p,\Phi_p^{-1}(\mu_p\xi))\le 4 r_p<\delta_H,\label{mupxi}\ee 
for $p$ close enough to 1 ($\delta_H$ is defined in \eqref{deltah}).
With this in mind we can rewrite the asymptotic estimates \eqref{gij1}, \eqref{gij2}, \eqref{detg} for $\xi\in \Omega_p$ as follows:

 \be  |(g_p)_{ij}(\xi)-\delta_{ij}|= |g_{ij}^{(p)}(\mu_p\xi)-\delta_{ij}|\le3\rh^{-1}\mu_p|\xi| \le 12 \rh^{-1} r_p\label{gij12}\ee
 \be |g_p^{ij}(\xi)-\delta^{ij}|\le6\rh^{-1} \mu_p|\xi|\le 24\rh^{-1} r_p\label{gij22}\ee
\be\Big|\sqrt{|g_p|(\xi)}-1\Big|\le 3n^2\rh^{-1} \mu_p|\xi|\le 12 n^2 \rh^{-1}r_p, \label{detg2}\ee
where the first inequalities to the left of the above  are true if we only have $\mu_p|\xi|<\dh$, and also
\be\frac12|\xi-\eta|\le d_{\gp}(\xi,\eta)\le 2|\xi-\eta|\ee
\be B_{\sR^n}(\xi,{\tfrac12}r)\subseteq B_{\gp}(\xi,r)\subseteq B_{\sR^n}(\xi,2r),\qquad |\xi|<\mu_p^{-1}\dh,\; r\le \mu_p^{-1}\dh\label{ballsp}\ee
(in particular for any $\xi,r$ if $p$ close enough to 1).
 
For later use, we also note the following estimate: for $p$ close enough to 1 and $\xi\in\Omega_p$
\be |\partial_k g_p^{ij}(\xi)|\le 12n^2 \rh^{-1} \mu_p\label{pgij}\ee
 
 To derive it, note that $\partial_k g_p^{ij}=-g_p^{i\ell}\partial_k (g_p)_{\ell h}g_p^{hj}$, as well as $g_p^{ij}\le 2$ for $p$ near 1, and 
 $|\partial_k (g_p)_{\ell h}(\xi)|=\mu_p|\partial_k g_{\ell h}^{(p)}(\xi)|\le 3\rh^{-1}\mu_p $, by definition of the harmonic norm.

 \medskip
If $\Delta_{p,\gp}$ denotes the $p-$Laplacian in the metric $\gp$, then we have the identities

\be \|v_p\|_{r,\gp}=\mu_p^{\frac n p-\frac n r-1}\|u_p\|_r,\label{v1}\ee

\be\|\nabla_\gp v_p\|_{r,\gp}=\mu_p^{\frac n p-\frac n r}\|\nabla u_p\|_r,\label{v2}\ee

\be- \Delta_{p,\gp} v_p+B\mu_p\|v_p\|_{p,\gp}^{1-p}\|\nabla_\gp v_p\|_{p,\gp}^{p-1}v_p^{p-1}=\lambda_p\|\nabla_\gp v_p\|_{p,\gp}^{p-1}v_p^{q-1}.\label{E2}\ee

\medskip
These can be brutally checked using the formulas 
\be |\nabla f|_g=(g^{k\ell}\partial_k f\partial_\ell f)^{\frac12}\label{grad}\ee
\be {\text {div}}_g X=\frac{1}{\sqrt{|g|}}\partial_i \big(\sqrt{|g|} X^i\big)\label{div}\ee
\be \Delta_{p,g}f =\frac 1{\sqrt{|g|}}\partial_i\Big(\sqrt{|g|}\big(g^{k\ell}\partial_k f\partial_\ell f\big)^{\frac{p-2}2}g^{ij}\partial_j f\Big)=g^{ij}
\partial_i\Big(\big(g^{k\ell}\partial_k f\partial_\ell f\big)^{\frac{p-2}2}\partial_j f\Big)\label{plap}
\ee
where the identity  in \eqref{plap} is valid in harmonic coordinates, since in such coordinates  $\partial_i\big(\sqrt{|g|} g^{ij})=0$.
In particular, note 
\be \|v_p\|_{q,\gp}=\|u_p\|_q=1\label{vpq}\ee
\be \|\nabla_\gp v_p\|_{p,\gp}=\|\nabla u_p\|_p.\label{pp}\ee

Now let
\be \vt_p=v_p^{\frac {p(n-1)}{n-p}}\label{vtilde}\ee
so that $\vt_p^\nnn=v_p^q$. We now claim that
\be \lim_{p\to1}\frac{\|\nabla \vt_p\|_1}{\|\vt_p\|_{\nnn}}=K(n,1)^{-1}.\label{sob1}\ee

Indeed from the sharp Sobolev inequality  on $\R^n$ we have $\|\nabla \vt_p\|_1\ge K(n,1)^{-1}{\|\vt_p\|_{\nnn}}$ which implies
\be \liminf_{p\to1}\frac {\|\nabla \vt_p\|_1}{\|\vt_p\|_{\nnn}}\ge K(n,1)^{-1}.\ee

On the other hand, from 
 \eqref{h2a}, \eqref{grad} (applied to $g_p$) we have
\be \frac14|\nabla \vt_p|^2\le |\nabla_\gp\vt_p|_\gp^2\le 4|\nabla \vt_p|^2,\ee
whereas \eqref{gij22} and \eqref{grad} imply that
\be\ba &\big||\nabla_\gp\vt_p|_\gp^2-|\nabla \vt_p|^2\big|\le6\rh^{-1} \mu_p|\xi| \Big(\sum_{k=1}^n |\p_k \vt_p|\Big)^2\cr&\le 6n\rh^{-1}\mu_p|\xi| |\nabla \vt_p|^2\le  24n\rh^{-1} \mu_p|\xi| |\nabla_\gp \vt_p|_\gp^2\ea\ee
hence
\be  |\nabla \vt_p|^2\le|\nabla_\gp\vt_p|_\gp^2\big(1+ 24n\rh^{-1}\mu_p|\xi| \big)
\ee
and finally
\be |\nabla \vt_p|\le|\nabla_\gp\vt_p|_\gp\big(1+12 n\rh^{-1} \mu_p|\xi| \big).\ee
Using the above and \eqref{detg2}, \eqref{mupxi}, we get that, for $\xi\in \Omega_p$  (and $p$ close to 1)
\be{\frac12}\le \sqrt{|g_p|(\xi)}\le 2,\ee\
\be \Big| \sqrt{|g_p|(\xi)}-1\Big|
\le 3n^2\rh^{-1}\mu_p|\xi|\le 6n^2\rh^{-1}\mu_p|\xi|\sqrt{|g_p|(\xi)} \le 24n^2\rh^{-1} r_p\sqrt{|g_p|(\xi)},
\label{vest}\ee
and
\be\ba \|\nabla \vt_p\|_1&\le\int_{\R^n} |\nabla_\gp \vt_p|_\gp\big(1+ 12n \rh^{-1}\mu_p|\xi| \big)\big(1+3n^2 \rh^{-1}\mu_p|\xi|\big)dV_\gp(\xi)\cr&
\le \int_{\R^n} |\nabla_\gp \vt_p|_\gp\big(1+16n^2\rh^{-1}\mu_p|\xi|\big)dV_\gp(\xi)
\le \|\nabla_\gp \vt_p\|_{1,\gp}\big(1+64n^2\rh^{-1} r_p\big)\ea\label{1}\ee
Arguing as in \eqref{eq1}
\be\ba \|\nabla_\gp \vt_p\|_{1,\gp}&=\frac{p(n-1)}{n-p}\int_{\R^n}|\nabla_\gp v_p|_\gp v_p^{\frac {n(p-1)}{n-p}}dV_\gp(\xi)\cr&\le \frac{p(n-1)}{n-p}\|\nabla_\gp v_p\|_{p,\gp}\|v_p\|_{q,\gp}=
 \frac{p(n-1)}{n-p}\|\nabla u_p\|_p\cr&= \frac{p(n-1)}{n-p}(\lambda_p-B\|u_p\|_p)\to K(n,1)^{-1}
\ea\label{2}
\ee
which together with 
\be\ba\|\vt_p\|_{\nnn}&=\int_{\R^n}v_p^q\;d\xi\ge\int_{\R^n}v_p^q\big(1-6n^2\rh^{-1}\mu_p|\xi|)\big)dV_\gp(\xi)\cr&\ge (1-24n^2\rh^{-1} r_p)\int_{\R^n}v_p^qdV_\gp(\xi)\to1 \ea\label{3} \ee
(where we used \eqref{vest}) yields $\limsup_{p\to1}{\|\nabla \vt_p\|_1}/{\|\vt_p\|_{\nnn}}\le K(n,1)^{-1}$, and hence \eqref{sob1}.

Now, \eqref{sob1} implies that $\{\vt_p\}$ is uniformly bounded in $W^{1,1}(\R^n)$ and therefore there is $v_0\in BV(\R^n)$ such that $\vt\to v_0$ weakly in $BV(\R^n)$, i.e. $\vt_p\to v_0$ in $L^1(K)$ for any $K$ compact, and $\int \phi\cdot \nabla \vt_p d\xi\to\int \phi\cdot Dv_0$, for any $\phi$ smooth, compactly supported and valued in $\R^n$.

Proceeding as in \cite{druet-pams} and \cite{noa-arxiv},  we get that $\vt_p\to \chi_{B_{\R^n}^{}(\xi_0,R_0)}$ strongly in $BV(\R^n)$, (in particular in $L^\nnn(\R^n)$) where $|B_{\R^n}^{}(0,R_0)|=B_nR_0^n=1$. 

\begin{lemma} \label{strongestimate} For  each  $b\in (0,n-1)$, if  $p$ close enough to 1, we have
\be v_p(\xi)\le\min\bigg\{1, \bigg(\frac {8R_0}{|\xi|}\bigg)^{\frac{n-p-b}{p-1}}\bigg\},\qquad \xi\in\R^n.\label{decay} \ee
\end{lemma}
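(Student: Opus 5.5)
The bound $v_p\le 1$ is immediate: $v_p(\xi)=\mu_p^{\frac np-1}u_p(\mu_p\xi)\le \mu_p^{\frac np-1}\max u_p=1$ by the definition of $\mu_p$. The decay part will follow the scheme of \cite{druet-pams} and \cite{noa-arxiv}: first a weak (non-sharp) pointwise decay from concentration, and then a comparison-principle argument for the rescaled equation \eqref{E2} on the exterior of a large ball. The new feature is that everything is done in harmonic coordinates, where the $p$-Laplacian has the nondivergence form \eqref{plap}.

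\textbf{Step 1 (concentration and smallness outside a ball).} From the strong convergence $\vt_p\to\chi_{B(\xi_0,R_0)}$ in $L^{\nnn}$ and $v_p(0)=1$ (maximum at the origin), I first show $|\xi_0|\le R_0$, so that $B(\xi_0,R_0)\subseteq B(0,2R_0)$. This gives $\int_{|\xi|\ge 2R_0} v_p^q\,d\xi\to0$. I then upgrade this to $\sup_{|\xi|\ge 4R_0}v_p\to 0$ via a Moser/De Giorgi iteration for \eqref{E2} on balls $B(\xi,R_0)$. The ingredients are the uniform ellipticity from \eqref{h2a} and \eqref{pgij}, the bound $\lambda_p\|\nabla v_p\|_p^{p-1}\le C$, and $v_p\le1$, which makes the right-hand side subcritical with small $L^{n/p}$-type coefficient $v_p^{q-p}$. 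I expect this estimate to be uniform as $p\to1$ because the constants depend only on $n$ and the ellipticity bounds, and the nonlinearity $v_p^{q-1}$ has a coefficient whose $L^{n/(p-1)\cdot}$ norm outside $B(0,2R_0)$ tends to zero.

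\textbf{Step 2 (supersolution barrier).} On $\{|\xi|>4R_0\}\cap\Omega_p$, I compare $v_p$ with $w(\xi)=(8R_0/|\xi|)^{\gamma}$, $\gamma=\frac{n-p-b}{p-1}$. For the Euclidean $p$-Laplacian,
\[
-\Delta_p |\xi|^{-\gamma}=\gamma^{p-1}\big(n-p-\gamma(p-1)\big)|\xi|^{-\gamma(p-1)-p}=\gamma^{p-1}\,b\,|\xi|^{-\gamma(p-1)-p}>0 .
\]
Using \eqref{plap} in the metric $\gp$ together with \eqref{gij22} and \eqref{pgij}, the metric errors are relative perturbations of size $O(\rh^{-1}\mu_p|\xi|)=O(\rh^{-1}r_p)$ and $O(\rh^{-1}\mu_p|\xi|)$. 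The first-derivative term from $\partial_i g^{ij}$ is absent in harmonic coordinates, but the term from differentiating $g^{k\ell}$ inside the $(p-2)/2$ power carries a factor $|p-2|\mu_p|\xi|\rh^{-1}$ relative to the main term. It is therefore dominated by the margin $b$ provided $r_p\rh^{-1}\ll b$, which holds for $p$ near $1$. Thus $-\Delta_{p,\gp}w\ge \tfrac b2\gamma^{p-1}|\xi|^{-\gamma(p-1)-p}$.

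Meanwhile, by Step 1, the right-hand side of \eqref{E2} satisfies $\lambda_p\|\nabla v_p\|_p^{p-1}v_p^{q-1}\le \e_p\,v_p^{p-1}|\xi|^{-p}$, using $v_p^{q-p}\le$ the small sup combined with the weak decay. The zeroth-order term $B\mu_p\|\cdot\|\,v_p^{p-1}$ has a favourable sign. Hence on the region $|\xi|>4R_0$, $v_p$ is a subsolution of $-\Delta_{p,\gp}v\le \e_p|\xi|^{-p}v^{p-1}$, while $w$ is a strict supersolution of the same inequality.

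\textbf{Step 3 (comparison).} On the boundary pieces, $w\ge 2^{\gamma}\ge1\ge v_p$ on $|\xi|=4R_0$, and $v_p=0\le w$ on $\partial\Omega_p$. The weak comparison principle for quasilinear operators of this type, applied to $(v_p-w)^+$ as a test function in the weak forms, gives $v_p\le w$ on the region. The main obstacle is Step 1, namely obtaining a uniform-in-$p$ sup bound outside $B(0,4R_0)$ from $L^q$ smallness as $p\to1$, where the Moser constants may degenerate. I would follow the argument in \cite{noa-arxiv}, checking that only \eqref{h2a}, \eqref{gij22} and \eqref{pgij} enter. The comparison step with a nonhomogeneous right-hand side is the second delicate point; if it becomes awkward, I would handle it by choosing $w$ with exponent slightly larger and absorbing $\e_p$.
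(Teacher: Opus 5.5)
Your architecture (unweighted smallness outside the concentration ball, the barrier $|\xi|^{-\frac{n-p-b}{p-1}}$ with margin $b$, then comparison) matches the paper, and your barrier computation is correct. However, Step 2 has a genuine gap.

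To make $w$ beat the potential term you need, uniformly on $\Omega_p\setminus B(0,4R_0)$,
\[
|\xi|^p v_p^{q-p}(\xi)=\big(|\xi|^{\frac np-1}v_p(\xi)\big)^{\frac{p^2}{n-p}}\le \varepsilon_p\to 0 .
\]
You justify this by ``the small sup combined with the weak decay'', but no weak decay is established anywhere in your proposal.

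Step 1, which uses Moser on balls of fixed radius $R_0$, only gives $\eta_p:=\sup_{|\xi|\ge 4R_0}v_p\to0$. Hence $|\xi|^pv_p^{q-p}\le |\xi|^p\eta_p^{q-p}$, which is not small once $|\xi|\gtrsim \eta_p^{-(q-p)/p}$. Since $\Omega_p$ is only known to lie in $B(0,4\mu_p^{-1}r_p)$, and $\mu_p^{-1}r_p$ may tend to infinity, the supersolution property of $w$ fails on the far part of the region. A uniformly small $v_p$ is simply not the right smallness; what is needed is the scale-invariant quantity $|\xi|^{\frac np-1}v_p$.

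The paper obtains exactly this. It first proves the weak bound $v_p\le\min\{1,(8R_0/|\xi|)^{\frac np-1}\}$, and then the statement $\lim_{p\to1}\sup_{B_{g_p}(\xi_0,R)^c}|\xi|^{\frac np-1}v_p=0$ for $R>2R_0$. Both come from a blow-up argument:
\begin{itemize}
\item take $\zeta_p$ maximizing $|\xi|^{\frac np-1}v_p$, set $\nu_p^{1-\frac np}=v_p(\zeta_p)$, and rescale $\phi_p(\xi)=\nu_p^{\frac np-1}v_p(\nu_p\xi+\zeta_p)$;
\item maximality of $\zeta_p$ gives the a priori bound $\phi_p\le 2^{\frac np-1}$ on a ball of radius comparable to $R_0\nu_p^{-1}$, so the Moser lemma applies with constants uniform in $p$;
\item this yields $1=\phi_p(0)\le C\|v_p\|_{L^q(B(\zeta_p,2R_0))}$, and the right side tends to $0$ because $B(\zeta_p,2R_0)$ lies outside $B(\xi_0,R_0)$.
\end{itemize}
Equivalently, you could run Moser at scale $|\xi|$ instead of $R_0$, using that $\|v_p^{q-p}\|_{L^{n/p}}=\|v_p\|_{L^q}^{q-p}$ is scale invariant. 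In that version, the local a priori bound supplied by the maximality of $\zeta_p$ is what keeps the constants uniform as $p\to1$. Your Step 1 is therefore not the main obstacle; this weighted estimate is.

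A secondary point concerns Step 3. Testing with $(v_p-w)^+$ does not close the argument, because the zero-order term $-\varepsilon_p|\xi|^{-p}f^{p-1}$ has the unfavourable sign for weak comparison. What works is the argument exploiting homogeneity of degree $p-1$ together with a strict supersolution, i.e.\ \cite{aubin-li} Lemma 3.4. The paper applies it to $L_p$, with the potential frozen as $\lambda_p\|\nabla_{g_p}v_p\|_{p,g_p}^{p-1}v_p^{q-p}$ so that $L_pv_p=0$, and then lets $R\to 2R_0$.
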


\ni{\bf Proof.} To establish this lemma we begin by deriving the following preliminary ``weak'' estimate: for $p$ close to 1 we have 
\be v_p(\xi) \le \min \bigg\{1, \bigg(\frac {8R_0}{|\xi|}\bigg)^{\frac n p-1}\bigg\},\qquad \xi\in\R^n\label{l1}.\ee

First, let's establish that for $R>2R_0$
\be \lim_{p\to1} \sup_{B_{g_p}(\xi_0,R)^c } v_p(\xi)=0\label{outside2}\ee
Suppose not, then assuming by contradiction that  there is $\z_p\in\Omega_p\setminus B_\gp(\xi_0,R)$ such that 
\be v_p(\z_p)=\max_{B_\gp(\xi_0,R)^c} v_p(\xi)\ge a>0\ee
(maybe along a sequence, with $p\to1$). Note that  from \eqref{ballsp} we get 
\be|\z_p-\xi_0|\ge\tfrac12 R.\label {n1}\ee

For any fixed $\delta>0$ we can apply the Moser iteration to $v_p\le 1$ as defined on $B_{\gp}(\z_p,\delta/2)$
to get
\be 0<a\le v_p(\z_p)\le \sup_{B_{\gp}(\z_p,\delta/2)}v_p(\xi)\le C\int_{B_{\gp}(\z_p,\delta)} v_p^q d V_{\gp}.\label{xx}\ee
Given $|\xi-\z_p|<2\delta$ we have  (from \eqref{n1})
\be|\xi-\xi_0|\ge|\z_p-\xi_0|-|\xi-\z_p|\ge \tfrac12R-2\delta>R_0\ee
provided we pick $\delta>0$ so that $R>2R_0+4\delta$. Therefore,  
\be \int_{B_{\gp}(\z_p,\delta)} v_p^q d V_{\gp}\le 2\int_{B_{\sR^n}(\z_p,2\delta)} v_p^q d\xi\le 2\int_{B_{\sR^n}(\xi_0,R_0)^c} v_p^qd\xi,\label {n2}\ee
and since $\vt_p\to \chi_{B(\xi_0,R_0)}$ in $L^\nnn$, 
the right hand side of \eqref{n2} tends to 0 as $p\to1$, contradicting \eqref{xx}, and so \eqref{outside2} is true.
  
It's important to observe at this point that, since $v_p(0)=1$ for all $p$, then \eqref{outside2} implies that for any given $R>2R_0$, we have $0\in B_\gp(\xi_0,R)$, and hence $|\xi_0|< 2R$ and finally 

 \be |\xi_0|\le 4R_0\label{xi0}.\ee

\medskip
To prove \eqref{l1}, let
\be w_p(\xi)=|\xi|^{\frac n p-1}v_p(\xi)\label{prelim}\ee
and assume by contradiction that if $\z_p$ is so that $w_p({\z_p})=\max w_p$, then 
$$w(\z_p)>(8R_0)^{\frac np-1}$$
 as $p\to1$ (again, along some subsequence).

Let 
\be\nu_p^{1-\frac n p}=v_p(\zeta_p)\le 1\ee
so that $w_p(\z_p)=(|\z_p|/\nu_p)^{\frac n p-1}>(8R_0)^{\frac np-1}$, and therefore 
\be|\zeta_p|>\nu_p8R_0\ge 8R_0.\label{zpest}\ee

Now let 
\be \phi_p(\xi)=\nu_p^{\frac n p-1}v_p(\nu_p\xi+\z_p)\label{phip}\ee
\be \wt \Omega_p=\nu_p^{-1}(\Omega_p-\z_p)\label{omegap}\ee
and let $\wt g_p$ be the metric on $\wt\Omega_p$  with components 
\be(\wt g_p)_{ij}(\xi)=(g_p)_{ij}(\nu_p\xi+\z_p)=g_{ij}^{(p)}(\mu_p(\nu_p\xi+\z_p)).\label{gtilde}\ee

Then, equations \eqref{v1}, \eqref{v2}, \eqref{E2} hold for $\phi_p$ instead of $v_p$ and with $\nu_p\mu_p$ instead of $\mu_p.$
Note also that, as in \eqref{ballsp}, 
\be B_{\sR^n}(0,\tfrac12 r)\subseteq B_\gpt(0,r)\subseteq B_{\sR^n}(0,2r),\qquad r<(\nu_p\mu_p)^{-1}\dh\label{ballspt}\ee
and 
\be \sqrt{|\wt g_p|(\xi)}\le 2,\qquad \xi\in \wt\Omega_p.\ee

We now have that $\phi_p(\xi)\le 2^{\frac n p-1}$ for $\xi\in B_\gpt(0,R_0\nu_p^{-1})$, if $p$ near 1. Indeed, assuming $\xi\in B_\gpt(0,R_0\nu_p^{-1})$, and since $R_0<\mu_p^{-1}\dh$ for $p$ close to 1, from \eqref{ballspt},  we have $|\xi|<2R_0\nu_p^{-1}$, therefore (using \eqref{zpest})
\be|\nu_p\xi+\z_p|\ge|\z_p| -\nu_p|\xi| \ge|\z_p|-2R_0\ge \frac12|\z_p|.\label{a1}\ee
  Then, 
\be \phi_p(\xi)=\nu_p^{\frac n p-1}|\nu_p\xi+\z_p|^{-\frac n p  +1}w_p(\nu_p\xi+\z_p)\le 2^{\frac n p-1}\nu_p^{\frac n p -1}|\z_p|^{-\frac n p +1}w_p(\zeta_p)=2^{\frac n p -1}.\label{a2}
\ee
Since $- \Delta_{p,\gp} \phi_p\le \lambda_p \phi_p^{q-1}\le K(n,1)^{-2} \phi_p^{q-1}$, the Moser iteration lemma (\cite{noa-arxiv}  Lemma 3.1)
gives 
\be \ba1&=\phi_p(0)\le  \sup_{B_{\wt g_p}(0,\frac12R_0\nu_p^{-1})} \phi_p\le C \bigg(\int_{B_{\gpt}(0,R_0\nu_p^{-1})} \phi_p^q \;dV_{\gpt}\bigg)^{\frac 1q}\le C
\bigg(2\int_{B_{\sR^n}(0,2R_0\nu_p^{-1})} \phi_p^q \;d\xi \bigg)^{\frac 1q}\cr&=C\bigg(\int_{B_{\sR^n}(0,2R_0\nu_p^{-1})} \nu_p^n v_p^q(\nu_p\xi+\z_p) \;d\xi\bigg)^{\frac 1q}= C \bigg( \int_{B_{\sR^n}(\z_p,2R_0 )} v_p^q(\xi) \;d\xi\bigg)^{\frac 1q}
\ea
 \label{contr}\ee
for $p$ close to 1.

On the other hand, we have $B_{\sR^n}(\z_p,2R_0)\subseteq B_{\sR^n}(\xi_0,R_0)^c$, since
 if $|\xi-\z_p|\le2R_0$ then 
 \be|\xi-\xi_0|\ge |\z_p-\xi_0|-|\xi-\z_p|\ge |\z_p|-|\xi_0|-2R_0\ge 2R_0\ee for $p$ close enough to 1 (we used \eqref{xi0}, \eqref{zpest}).
 Hence, from \eqref{detg2} and \eqref{ballspt} we obtain 
\be\int_{\Br(\z_p,2R_0)} v_p^q \;d\xi\le\int_{\Br(\xi_0,R_0)^c} v_p^q\;d\xi=\int_{\Br(\xi_0,R_0)^c} \vt_p^\nnn\;d\xi
\label{a3}\ee
and since $\vt_p\to \chi_{B(\xi_0,R_0)}$ in $L^\nnn$, from \eqref{a3} we get that 
\be\bigg(\int_{\Br(\z_p,2R_0 )} v_p^q \;d\xi\bigg)^{\frac 1q}\to0,\qquad p\to1\label{a4}\ee
thereby contradicting \eqref{contr}, which establishes \eqref{l1}

Now we obtain the following pointwise  estimate on $v_p$: for any $R>2R_0$ we have
\be \lim_{p\to1} \sup_{B_\gp(\xi_0,R)^c } |\xi|^{\frac n p-1} v_p(\xi)=0.\label{outside1}\ee

Suppose by contradiction that \eqref{outside1} fails, let
 
 \be w_p(\xi)=|\xi|^{\frac n p-1}v_p(\xi)\ee
  and lets us find points $\z_p\in \Omega_p\setminus B_\gp(\xi_0,R)$ such that 
 \be w(\z_p)=\max_{B_{\gp}(\xi_0,R)^c} |\z_p|^{\frac n p-1}v_p(\z_p):=|\z_p|^{\frac n p-1}\nu_p^{-\frac n p+1}\ge a^{\frac np-1}>0.\ee
Because of \eqref{outside2} we have $v_p(\z_p)\to0$ hence $\nu_p\to+\infty$ and  $|\z_p|\ge a\nu_p\to+\infty$.

Defining $\phi_p, \wt\Omega_p, \wt\gp$ as in \eqref{phip}, \eqref{omegap}, \eqref{gtilde}, the proof of \eqref{outside1} follows as in the proof of \eqref{l1},  the only difference is that this time we have to use  $a/8$ rather than $R_0$.

\bigskip
Now consider the operator $L_p$ defined as 
\be L_p f=-\Delta_{p,\gp} f+{B} \mu_p \|v_p\|_{p,\gp}^{1-p}\|\nabla_\gp v_p\|_{p,\gp}^{p-1} f^{p-1}-\lambda_p\|\nabla_\gp v_p\|_{p,\gp}^{p-1} v_p^{q-p} f^{p-1}\ee
and let 
\be G_p(\xi)= |\xi|^{-\frac{n-p-b}{p-1}}.\ee
We want to prove now that for fixed $R>2R_0$ and $p$ near 1.
\be L_p G_p(\xi)>0,\qquad \xi \in \Omega_p\setminus B_{\gp}(\xi_0,R).\label{pos}\ee
Once this estimate is in place then we can argue as follows. If  $\xi\in \overline{B_\gp(\xi_0,R)}$, then using \eqref{ballsp} and \eqref{xi0}  we get that $|\xi|\le |\xi-\xi_0|+|\xi_0|\le \tfrac12R+4R_0\le4 R$, so that for $p$ close enough to 1 
\be v_p(\xi)\le 1\le \bigg(\frac{4R}{|\xi|}\bigg)^{\frac {n-p-b}{p-1}}=\big(4R\big)^{\frac {n-p-b}{p-1}}G_p(\xi)\qquad \xi\in \overline{B_\gp(\xi_0,R)},\ee
in particular if $\xi\in \p B_{\gp}(\xi_0,R)$.
However since $L_pv_p=0$ on $\Omega_p$ using the comparison Lemma in \cite{aubin-li}  Lemma 3.4, we get that $v_p(\xi)\le\big(4R\big)^{\frac {n-p-b}{p-1}} G_p(\xi)$ in $\Omega_p\setminus B_\gp(\xi_0,R)$, and hence everywhere. Letting $R\to 2R_0$ gives \eqref{decay}.

\medskip
Now let us prove \eqref{pos}. Using \eqref{plap} we have
\be \p_j G_p=-\frac{n-p-b}{p-1} |\xi|^{-\frac{n-p-b}{p-1}-2} \xi_j\ee
\be |\nabla G_p|_\gp^{p-2}=\bigg(\frac{n-p-b}{p-1}\bigg)^{p-2} |\xi|^{-(p-2)\big(\frac{n-p-b}{p-1}+2\big)}\big(g_p^{k\ell}\xi_k \xi_\ell\big)^{\frac{p-2}2}\ee
\be g_p^{ij}\p_jG_p=-\frac{n-p-b}{p-1} |\xi|^{-\frac{n-p-b}{p-1}-2} g_p^{ij}\xi_j\ee

\be\ba&-\Delta_{p,\gp}G_p(\xi)=\bigg(\frac{n-p-b}{p-1}\bigg)^{p-1} \bigg[\Big(\partial_i |\xi|^{-n-p+b+2}\big(g_p^{k\ell}\xi_k \xi_\ell\big)^{\frac{p-2}2}g_p^{ij}\xi_j\cr&\qquad +|\xi|^{-n-p+b+2}\p_i\big(g_p^{k\ell}\xi_k \xi_\ell\big)^{\frac{p-2}2} g_p^{ij}\xi_j+
|\xi|^{-n-p+b+2}\big(g_p^{k\ell}\xi_k \xi_\ell\big)^{\frac{p-2}2}g_p^{ij}\p_i \xi_j\bigg]\cr&=
\bigg(\frac{n-p-b}{p-1}\bigg)^{p-1} \bigg[(-n-p+b+2)|\xi|^{-n-p+b}\big(g_p^{k\ell}\xi_k \xi_\ell\big)^{\frac{p}2}\cr&\qquad+|\xi|^{-n-p+b+2}\frac{p-2}2 \big(g_p^{k\ell}\xi_k \xi_\ell\big)^{\frac p2 -2}\Big(2g_p^{ik}\xi_k+(\partial_i g_p^{kl}\big)\xi_k\xi_\ell\Big)\big(g_p^{ij}\xi_j\big)\cr&\qquad+ |\xi|^{-n-p+b+2} \big(g_p^{k\ell}\xi_k \xi_\ell\big)^{\frac {p-2} 2})g_p^{ij}\delta_{ij}\bigg]\cr&
=\bigg(\frac{n-p-b}{p-1}\bigg)^{p-1} |\xi|^{-n+b}\bigg[(-n-p+b+2)\frac{\big(g_p^{k\ell}\xi_k\xi_\ell\big)^{\frac p2}}{|\xi|^p}+(p-2)\frac{\big(g_p^{k\ell}\xi_k\xi_\ell\big)^{\frac p2-2}\delta_{ki}g_p^{k\ell}\xi_\ell g_p^{ij}\xi_j }{|\xi|^{p-2}}\cr&\qquad +g_p^{ij}\delta_{ij}\frac{\big(g_p^{k\ell}\xi_k\xi_\ell\big)^{\frac p 2-1}}{|\xi|^{p-2}}+\frac{p-2}2 |\xi|\frac{\big(g_p^{k\ell}\xi_k\xi_\ell\big)^{\frac p2-2}}{|\xi|^{p-4}}\frac{\big(\p_ig_p^{k\ell}\big)\xi_k\xi_\ell g_p^{ij}\xi_j}{|\xi|^3}\bigg]
\ea\label{plap1}\ee

Using the  estimate in  \eqref{gij22} in $\Omega_p$, 
 we then have, for $p$ close enough to 1, 
 \be\big|g_p^{k\ell}\xi_k\xi_\ell-|\xi|^2\big|\le 24 \rh^{-1} r_p \Big(\sum_k|\xi_k|\Big)^2\le 24n  \rh^{-1}r_p|\xi|^2\label{b1}\ee
  \be\big|\big(g_p^{k\ell}\xi_k\xi_\ell\big)^{\frac p 2}-|\xi|^p\big|\le 24n  \rh^{-1}r_p|\xi|^p\label{bb2}\ee
\be\big|\big(g_p^{k\ell}\xi_k\xi_\ell\big)^{\frac p 2-2}-|\xi|^{p-4}\big|\le 144n  \rh^{-1}r_p|\xi|^{p-4}\label{b3}\ee
 \be\big|\big(g_p^{k\ell}\xi_k\xi_\ell\big)^{\frac p 2-1}-|\xi|^{p-2}\big|\le 48n  \rh^{-1}r_p|\xi|^{p-2}\label{b31}\ee
\be\sum_i\big(g_p^{ik}\xi_k\big)^2=|\xi|^2+2 \xi_i (g_p^{ik}-\delta^{ik})\xi_k+\sum_i\Big(\big(g_p^{ik}-\delta^{ik}\big)\xi_k\Big)^2\label{b4}\ee
  \be\bigg|\sum_i\big(g_p^{ik}\xi_k\big)^2-|\xi|^2\bigg|\le 48   \rh^{-1}r_p \Big(\sum_k|\xi_k|\Big)^2+24^2 \rh^{-1}r_p^2|\xi|^2\le 100n  \rh^{-1}r_p|\xi|^2 \label{b5}\ee
  \be\bigg|\sum_i g_p^{ii}-n\bigg|\le24 n    \rh^{-1}r_p\label{b6}\ee
  (for \eqref{bb2} we used $(1+t)^a\le 1+a t$ and $(1-t)^a \ge 1-t$, when $0<a,t<1$, for \eqref{b3} we used $(1-t)^a\le 1+6t, (1+t)^a\ge 1-6t$, when $-3/2\le a\le -1$, and for \eqref{b31} we used $(1-t)^a\le 1+2t, (1+t)^a\ge 1-2t$, when $-1\le a< 0$

  Also, using \eqref{pgij},
  \be|\p_i g_p^{ij}\xi_j|\le 12n^4 \rh^{-1}\mu_p |\xi|\le 12n^4 \rh^{-1}\mu_p|\xi|\label{q1}\ee
  and
  \be\ba |\big(\p_ig_p^{k\ell}\big)\xi_k\xi_\ell g_p^{ij}\xi_j|&\le12n^2 \rh^{-1}\mu_p \bigg(\sum_k|\xi_k|\bigg)^2 \sum_i |g_p^{ij}\xi_j|\le 12n^2 \rh^{-1}\mu_p n|\xi|^2\sqrt n\sqrt 2|\xi|\cr&\le 48n^4 \rh^{-1}\mu_p|\xi|^3,
\ea\label{q2}  \ee
  where we used \eqref{b5}, which guarantees $\sum_i\big(g_p^{ij}\xi_j\big)^2\le 2|\xi|^2$ for $p$ close to 1.
  
  \medskip
  \begin{rk} We observe that the estimate on the partials of the metric tensor \eqref{gij0} is used only to derive the above estimates \eqref{q1}, \eqref{q2}.
  \end{rk}
  
  \medskip

  Putting these estimates together we find
  
  \be\ba -&\Delta_{p,\gp}G_p(\xi)\ge \bigg(\frac{n-p-b}{p-1}\bigg)^{p-1} |\xi|^{-n+b}\Big[(-n-p+b+2)(1-24n  \rh^{-1}r_p)\cr&+(p-2)(1-144n   \rh^{-1}r_p)(1-100n  \rh^{-1}r_p)+(n-24n  \rh^{-1}r_p)(1-48n  \rh^{-1}r_p)\cr&-\frac{p-2}2(1+144n  \rh^{-1}r_p)48n^4 n \rh^{-1}r_p\;\Big]\ge \bigg(\frac{n-p-b}{p-1}\bigg)^{p-1} |\xi|^{-n+b}\big( b-C(n) \rh^{-1}r_p\big)\ea\ee
  where $C(n)$ is some constant depending only on $n$, and for $p$ close enough to 1.
  
  Finally,
  \be L_p G_p(\xi)\ge  |\xi|^{-n+b}\bigg[\bigg(\frac{n-p-b}{p-1}\bigg)^{p-1} \big( b-C(n) \rh^{-1}r_p\big)-\lambda_p\|\nabla_\gp v_p\|_{p,\gp}^{p-1}v_p^{q-p}(\xi)|\xi|^p\bigg] 
  \ee
  Since  for  $\xi\in \Omega_p\setminus B_{\gp}(\xi_0,R)$ (with $R>2R_0$), 
 \be |\xi|^p v_p^{q-p}(\xi)=\Big(|\xi|^{\frac n p -1}v_p(\xi)\Big)^{\frac{p^2}{n-p}}\le\sup_{\Omega_p\setminus B_\gp(\xi_0,R)}\Big(|\xi|^{\frac n p -1}v_p(\xi)\Big)^{\frac{p^2}{n-p}}\to0
 \ee
as $p\to1$, from \eqref{outside1},   and using  \eqref{p2}, \eqref{p3}, \eqref{p4}, \eqref{pp}, \eqref{outside1}, we get that the expression inside brackets converges to a positive constant uniformly on $\Omega_p\setminus B_{\gp}(0,R)$, thereby proving \eqref{pos}, and therefore Lemma \ref{strongestimate}.

\qed
\smallskip
\begin{rk}\label{weakest}  Estimates \eqref{decay} and \eqref{l1} are a slight improvements of those given in \cite{druet-pams}, \cite{noa-arxiv}, in terms of the multiplicative constants on the right hand sides.

\bigskip
Now we move to the final part of the proof of Theorem \ref{druet}.  From the Sobolev inequality we have
\be\frac{\|\vt_p\|_\nnn}{\|\nabla\vt_p\|_1}\le K(n,1).\label{d8}\ee

The goal now is to improve the asymptotic estimates \eqref{1}, \eqref{3}, so as to have explicit dependence on $\mu_p$ rather than $r_p$.

From \eqref{vpq}, \eqref{vtilde} and \eqref{detg2} we have 
\be\ba\|\vt_p\|_{\nnn}&\ge\int_{\R^n}v_p^q\Big(\sqrt{|g_p|(\xi)}-3n^2\rh^{-1}\mu_p|\xi|\Big)d\xi =1-3n^2\rh^{-1} \mu_p\int_{\R^n} |\xi| v_p^q(\xi) d\xi.
\ea\ee
From the decay estimate \eqref{decay}, given any $R>0$, and with $\gamma_p=\frac{n-p-b}{p-1}$
\be\ba \int_{\R^n} |\xi| v_p^q(\xi) d\xi&\le \int_{|\xi|\le R } |\xi|d\xi+(8R_0)^{q\sgam_p }\int_{|\xi|\ge R} |\xi|^{1-q\sgam_p }d\xi\cr&=
\frac{\omega_{n-1}}{n+1}R^{n+1}\bigg[1+\bigg(\frac{8R_0}R\bigg)^{q\sgam_p} \frac{n+1}{q\gamma_p-n-1}\bigg].\label{d1}
\ea\ee
Since $0<b<n-1$ then for $p$ close enough to 1 we have $b<n-p$ and 
\be q\gamma_p-n-1=\frac{np}{n-p} \frac{n-p-b}{p-1}-n-1\sim \frac{n}{n-1}\frac{n-1-b}{p-1}-n-1\to+\infty,\ee
as $p\to1$, so choosing $R=8R_0$, and recalling that $|B_{\sR^n}(0,R_0)|=1$,  we get
\be\|\vt_p\|_\nnn\ge 1-\frac n{n+1}R_08^{n+1}3\rh^{-1}\mu_p\big(1+o(1)\big):=1-\mu_p\big(H_1+o(1)\big).\label{d7}
\ee

\bigskip
From \eqref{1}, \eqref{2} we have
\be\ba \|\nabla \vt_p\|_1
&\le \int_{\R^n} |\nabla_\gp \vt_p|_\gp\big(1+16n^2 \rh^{-1}\mu_p|\xi| \big)dV_\gp(\xi)\cr&\le \frac{p(n-1)}{n-p}(\lambda_p-{B}\|u_p\|_p)+16n^2\rh^{-1}\mu_p \int_{\R^n}  |\xi| |\nabla_\gp \vt_p|_\gp dV_\gp(\xi)\cr&=
\frac{p(n-1)}{n-p}(\lambda_p-{B}\mu_p\|v_p\|_{p,\gp})+16n^2\rh^{-1}\mu_p \int_{\R^n}  |\xi| |\nabla_\gp \vt_p|_\gp dV_\gp(\xi)
\ea\label{E29}
\ee

The next goal is to show that the integral in \eqref{E29} is bounded, using the decay estimate \eqref{decay}.

First, we have

\be\ba &\int_{\R^n}  |\xi| |\nabla_\gp \vt_p|_\gp dV_\gp=\frac{p(n-1)}{n-p}\int_{\R^n}  |\xi|v_p^{\frac{n(p-1)}{n-p}}  |\nabla v_p|_\gp dV_\gp\cr&\le \frac{p(n-1)}{n-p}\bigg(\int_{\R^n} v_p^{p'\frac{n(p-1)}{n-p}} dV_{\gp}\bigg)^{\frac1{p'}}\bigg(\int_{\R^n}  |\xi|^p |\nabla_{\gp} v_p|_{\gp}^p dV_\gp\bigg)^{\frac1p} \cr&=\frac{p(n-1)}{n-p}\bigg(\int_{\R^n}  |\xi|^p |\nabla_{\gp} v_p|_{\gp}^p dV_\gp\bigg)^{\frac1p}.\label{d0}
\ea\ee

Multiplying \eqref{E2} by $|\xi|^p v_p$ and integrating by parts we obtain

\be\ba &\int_{\R^n} |\nabla_\gp v_p|_\gp^{p-2}\langle\nabla_\gp (|\xi|^p v_p),\nabla_\gp v_p\rangle_\gp dV_\gp\cr
&=\lambda_p\|\nabla_\gp v_p\|_{p,\gp}^{p-1}\int_{\R^n} |\xi|^pv_p^q dV_\gp-B\mu_p\|v_p\|_{p,\gp}^{1-p}\|\nabla_\gp v_p\|_{p,\gp}^{p-1}\int_{\R^n} |\xi|^p v_p^pdV_\gp
\cr&\le 
\lambda_p\|\nabla_\gp v_p\|_{p,\gp}^{p-1}\int_{\R^n} |\xi|^pv_p^q dV_\gp\le 2\lambda_p\|\nabla_\gp v_p\|_{p,\gp}^{p-1}\int_{\R^n} |\xi|^pv_p^q d\xi.
\ea\ee
Using the decay bound\eqref{decay} as in \eqref{d1}
\be\ba&\int_{\R^n}|\xi|^p v_p^q(\xi)d\xi\le\int_{|\xi|\le 8R_0}|\xi|^pd\xi+(8R_0)^{q\sgam_p} \int_{|\xi|\ge 8R_0}|\xi|^{p-q\sgam_ p}d\xi\cr&
=\frac{\omega_{n-1}}{n+p}(8R_0)^{n+p}\bigg(1+\frac1{q\gamma_p-n-p}\bigg)=\frac n{n+1} 8^{n+1} R_0\big(1+o(1)\big),\label{d2}
\ea\ee
and this gives 
\be\ba&\int_{\R^n} |\nabla_\gp v_p|_\gp^{p-2}\langle\nabla_\gp (|\xi|^p v_p),\nabla_\gp v_p\rangle_\gp dV_\gp
\le 2\lambda_p^p\frac n{n+1} 8^{n+1} R_0\big(1+o(1)\big)\cr&
<2\Big(K(n,1)^{-1}\frac{n-p}{p(n-1)}\Big)^p \frac n{n+1} 8^{n+1} R_0\big(1+o(1)\big)=\frac {2n^2}{n+1} 8^{n+1}\big(1+o(1)\big).\label{d3}
\ea\ee
On the other hand (where as usual $\xi^*=\xi/|\xi|$)
\be\ba&\int_{\R^n} |\nabla_\gp v_p|_\gp^{p-2}\langle\nabla_\gp (|\xi|^p v_p),\nabla_\gp v_p\rangle_\gp dV_\gp=\int_{\R^n}
| \nabla_\gp v_p|_\gp^p |\xi|^pdV_\gp\cr&+p\int_{\R^n} v_p|\xi|^{p-1}  |\nabla_\gp v_p|_\gp^{p-2}\langle \xi^*, \nabla_\gp v_p\rangle_\gp dV_\gp \ge \int_{\R^n}
| \nabla_\gp v_p|_\gp^p |\xi|^pdV_\gp\cr&-p\int_{\R^n} v_p|\xi|^{p-1}  |\nabla_\gp v_p|_\gp^{p-1} dV_\gp.\label{d4}
 \ea\ee

From H\"older's and Young's inequalities  
\be \ba p\int_{\R^n} v_p|\xi|^{p-1}  |\nabla_\gp v_p|_\gp^{p-1} dV_\gp&\le p \|v_p\|_{p,\gp}\bigg(\int_{\R^n} |\xi|^{p}  |\nabla_\gp v_p|_\gp^{p} dV_\gp\bigg)^{\frac {p-1}p}\cr&
\le \frac{p^p \|v_p\|_{p,\gp}^p} p+\frac{1}{p'}\int_{\R^n} |\xi|^{p}  |\nabla_\gp v_p|_\gp^{p} dV_\gp
\label{d5}\ea\ee
then, combining \eqref{d2}, \eqref{d3}, \eqref{d4}, \eqref{d5} and using Young's inequality, we get
\be \int_{\R^n} |\xi|^{p}  |\nabla_\gp v_p|_\gp^{p} dV_\gp\le \frac {2n^2p}{n+1} 8^{n+1}\big(1+o(1)\big)+p^p \|v_p\|_{p,\gp}^p.\label{d5a}
\ee
Using that $v_p\to \chi_{B(\xi_0,R_0)}$ a.e. (along a subsequence)  and the decay estimate \eqref{decay},  we have
\be \|v_p\|_{p,\gp}^p=(1+O(r_p))\int_{\R^n}v_p^p d\xi=1+o(1)\ee
putting this togeter with  \eqref{d0}, \eqref{d5a} we get
\be \int_{\R^n}  |\xi| |\nabla_\gp \vt_p|_\gp dV_\gp\le 1+\frac {2n^2}{n+1} 8^{n+1}+o(1)\le 2n 8^{n+1}
\ee
and from \eqref{E29}, recalling that $K(n,1)=R_0/n$, 
\be\ba \|\nabla \vt_p\|_1
&\le
K(n,1)^{-1}-\frac{p(n-1)}{n-p}B\mu_p\|v_p\|_{p,\gp}+16n^2\rh^{-1}\mu_p 2n 8^{n+1}+o(\mu_p)\cr&
<K(n,1)^{-1} \bigg(1+\mu_p\Big(-\frac{R_0}n B+32n^2 8^{n+1} R_0 r_H^{-1}\Big)\bigg)+o(\mu_p) \cr&:=
K(n,1)^{-1}\Big(1+\mu_p\big(H_2+o(1)\big)\Big).
\ea\label {d6}\ee

Combining \eqref{d8},  \eqref{d7} and \eqref{d6} we get
\be K(n,1)\ge\frac{\|\vt_p\|_\nnn}{\|\nabla\vt_p\|_1}>K(n,1)\frac{ 1-\mu_p\big(H_1+o(1)\big)}{1+\mu_p\big(H_2+o(1)\big)}=K(n,1)\Big(1-\mu_p\big(H_1+H_2+o(1)\big)\Big),
\ee
and,  finally, the above bound implies $H_1+H_2\ge0$, i.e.
\be B\le \Big(32n^2+\frac{3n^2}{n+1}\Big) 8^{n+1} r_H^{-1}.\label{B}\ee
This shows that if $B$ larger than the right-hand side of \eqref{B},  then $r_0>0$ can be found so that \eqref{sobdruet} holds for all $u\in W_0^{1,1}(B(x,r_0))$, and all $x\in M$.

\qed
\vskip1em
\section{A first order isoperimetric profile expansion}\label{6}

In this section we prove the following:

\begin{theorem}\label{expansion} Suppose that on $M$ we have $|\ric|\le K$ and $\inj(M)>0$. Then, there exist $\tau_0,B>0$ such that  
\be \im(v)\ge nB_n^{\frac1n} v^\nn- B v\qquad 0<v\le \tau_0.\label{cond1}\ee

\end{theorem}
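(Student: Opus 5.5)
The plan is to derive \eqref{cond1} from Corollary \ref{druet1}. That corollary already gives the inequality for every finite-perimeter set of diameter at most $r_0$, with constants $B,r_0$ that are uniform over $M$. So the whole task is to reduce the infimum defining $\im(v)$, for small $v$, to sets of small diameter. The difficulty is that on a noncompact $M$ an isoperimetric region of volume $v$ need not exist.

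Fix $v\le\tau_0$ and take a minimizing sequence $E_k$ with $\vol(E_k)=v$ and $\per(E_k)\to\im(v)$. By the generalized existence theory for manifolds with bounded geometry (\cite{anp}, \cite{nardulli-asian}, \cite{apps-annales}), which applies under \eqref{ric} because the $C^{1,\alpha}$ harmonic radius is positive and balls do not collapse, the following structure holds. There are finitely many pointed $C^{1,\alpha}$ limit manifolds $(M_i,g_i,x_i)$, with $M_0=M$ allowed, and isoperimetric regions $D_i\subseteq M_i$ such that $\sum_i\vol(D_i)=v$ and $\sum_i\per(D_i)=\im(v)$. Each $M_i$ again satisfies the $C^{1,\alpha}$ harmonic radius bound with the same $\rh$ and $Q$.

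By \cite{noa-imrn}, \cite{apps-annales}, isoperimetric regions of small volume in such spaces have diameter at most $C v^{1/n}$, uniformly. This holds provided $\tau_0$ is small, using $I_0(0^+)=1$ together with a monotonicity or density argument. We therefore choose $\tau_0$ so that $\diam(D_i)\le r_0/2$.

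Next, each $D_i$ lies in a single harmonic chart of $M_i$. That chart is a $C^{1,\alpha}$ limit of harmonic charts $\Phi_{p_k}$ of $M$ centered at points $x_k\to\infty$. We transplant $D_i$ to regions $D_i^k=\Phi_{p_k}^{-1}(\Phi(D_i))\subseteq M$. Since the metric coefficients converge uniformly, $\vol(D_i^k)\to\vol(D_i)$, $\per(D_i^k)\to\per(D_i)$, and $\diam(D_i^k)\le r_0$ eventually. Applying Corollary \ref{druet1} to $D_i^k$ and passing to the limit gives
\[\per(D_i)\ge nB_n^{\frac1n}\vol(D_i)^{\nn}-B\vol(D_i).\]
Alternatively, one can prove Corollary \ref{druet1} directly on $M_i$: the proof of Theorem \ref{druet} only used the harmonic radius, and it yields the same bound $B=C(n)\rh^{-1}$ by \eqref{B}.

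Summing over $i$ and using the concavity estimate $\sum_i a_i^{\nn}\ge(\sum_i a_i)^{\nn}$ gives
\[\im(v)=\sum_i\per(D_i)\ge nB_n^{\frac1n}v^{\nn}-Bv,\]
which is \eqref{cond1}.

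The main obstacle is the uniform small-diameter control of isoperimetric regions in the limit spaces, and making sure this control holds uniformly in $i$ and in the limit manifold. I would first try to obtain it from the uniform lower density estimates in \cite{apps-annales}, which depend only on the Ricci lower bound and noncollapsing, combined with connectedness of small-volume isoperimetric regions. A simpler way to deal with disconnected pieces is to apply the estimate componentwise, since each component is itself small.
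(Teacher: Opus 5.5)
Your proposal is correct and follows essentially the same route as the paper: existence of isoperimetric regions either in $M$ or on $C^{1,\alpha}$ pointed limits, the uniform bound $\diam\le C_0\,\vol^{1/n}$ to bring the diameter below $r_0$, transplanting limit regions back to $M$ via the convergence diffeomorphisms, applying Corollary \ref{druet1}, and passing to the limit. The only difference is that the paper uses the fact that for small $v$ there is a single limit piece, whereas you allow finitely many pieces and sum using $\sum_i a_i^{\nn}\ge(\sum_i a_i)^{\nn}$, which makes your argument independent of that one-piece result.
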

Note that \eqref{cond1} clearly implies
\be I_0(v)\ge 1-(nB_n)^{-\frac1n} B v^{\frac1n},\qquad 0<v\le \tau_0.\ee

\bigskip
\ni{\bf Proof.} Under the given assumptions there exists $v_0>0$ such that for $0<v<v_0$ at least one of the following two cases occurs:

\medskip
\ni(A) There exists $\Omega\subseteq M$ with finite perimeter such that 
\be \vol(\Omega)=v,\qquad \im(v)=\per(\Omega)\label{isregion}\ee

\smallskip
\ni(B) There exists a pointed $C^{1,\alpha}$ Riemannian manifold $(M_0, x_0, g_0)$ and a sequence $\{x_j\}$ on $M$ such that $(M,x_j,g)\to (M_0,x_0,g_0)$ in the $C^{1,\a}$ topology. Moreover, there exists  $\Omega_0\subseteq M_0$ with finite perimeter such that 
\be \vol_{g_0}(\Omega_0)=v,\qquad \im(v)=\per_{g_0}(\Omega_0).\label{isregion0}\ee

\smallskip
Recall that $(M,x_j,g)\to (M_0,x_0,g_0)$ in the $C^{1,\a}$ topology means the following: for any compact set $K\subseteq M_0$, with $x_0\in K$ there exist, up to a subsequence, compact sets $K_j\subseteq M$, $x_j\in K_j$, and $C^{2,\alpha}$ diffeomorphisms $\Phi_j:K\to K_j$ such that $\Phi_j^{-1}(x_j)\to x_0$ and $\Phi_j^* g$   converges in $C^{1,\alpha}$  to $g_0$, in the induced $C^{2,\alpha}$ complete atlas of $K$.

\smallskip

In essence, for small enough volumes, one can guarantee the existence of isoperimetric regions in the original manifold or ``at infinity''. This concept has been first studied by Nardulli in \cite{nardulli-asian} and \cite{nardulli-calcvar}, where it is proved, roughly speaking,  that given any minimizing sequence of domains for $\im(v)$ (of  volume $v$), then up to a subsequence each domain splits into finitely many pieces each of which converges  to a domain  on a pointed limit manifold, or the manifold itself, and the union of such limit domains is an isoperimetric region of  volume $v$. Additionally, if the volume $v$ is small enough then there is only one such limit region. This result is stated in \cite{nardulli-asian}, Theorem 2, which is valid under our assumption \eqref{ric} (note that in that theorem, property (X) is stated to be  valid in $C^m$ geometry with $m\ge4$ but it is still valid under $C^{1,\alpha}$ bounded geometry, e.g. under \eqref{ric}, see third remark on page 6). For additional information, see  \cite{apps-annales} Lemma 4.18, in the more general context of RCD spaces, building on results obtained in \cite{anp} Theorem 1.1, Corollary 1.3.

 Returning to the proof, one of the features of an isoperimetric region is that its diameter is well controlled by its volume: If $\ric\ge K$ and $\inj(M)>0$ then there exist $C_0>0$, and $v_0>0$ such that any isoperimetric region $\Omega$ satisfies  
\be \diam(\Omega)\le C_0 \vol(\Omega)^{\frac1n},\qquad {\text{ whenever }}\;\vol(\Omega)\le v_0.\label{diam-vol}\ee

This result was proved first in \cite{noa-imrn} Lemma 4.9 under bounded sectional curvature and $\inj(M)>0$, whereas a more general version is obtained in \cite{apps-annales} Prop. 4.21, valid in the general context of RCD spaces, which includes Riemannian manifolds satisfying $\ric\ge K$ and $\inj(M)>0$, and their pointed limits. Let us  then choose 
\be \tau_0=\min\bigg\{\bigg(\frac {r_0}{2C_0}\bigg)^n,v_0\bigg\}\label{tau1}\ee
where $r_0$ is as in Theorem \ref{druet}. 
With this choice, if  case (A) occurs with  $\vol(\Omega)=v\le\tau_0$, then $\diam(\Omega)\le r_0/2< r_0$, and \eqref{isop1} of Corollary \ref{druet1} gives \eqref{cond1} directly.

If case (B) occurs with $\vol_{g_0}(\Omega_0)=v\le\tau_0$, we still have $\diam(\Omega_0)\le r_0/2$, but in order to apply Corollary \ref{druet1} we need to approximate $\Omega_0$ with suitable domains  living  on $M$. To that effect, we  can find a  sequence of domains $\{\Omega_j\}$ in $M$ with finite perimeter, and such that as $j\to\infty$, 
\be \diam(\Omega_j)  \to \diam(\Omega_0),\qquad \vol(\Omega_j)\to \vol_{g_0}(\Omega_0),\qquad \per(\Omega_j)\to \per_{g_0}(\Omega_0).\label{seq}\ee

This can be achieved  via the diffeomorphisms $\Phi_j$ defining the $C^{1,\alpha}$ pointed  convergence $(M,x_j,g)\to (M_0,x_0,g_0)$, associated with a compact set $K=\overline {B(x_0,R)}$, where $R$ is large enough so as to contain $\Omega_0$ (which is bounded by \eqref{diam-vol}): simply let $\Omega_j=\Phi_j(\Omega_0)$. The properties in \eqref{seq} are then guaranteed by  the $C^{1,\alpha}$ convergence of the metrics $\Phi_j^* g$ (cf. \cite{nardulli-calcvar} proof of  Lemma 3.3).

Since $\diam(\Omega_j)<r_0$ for large $j$,  we can now apply Corollary  \ref{druet1} to get
\be \per(\Omega_j)\ge nB_n^{\frac1n} \vol(\Omega_j)^{\frac{n-1}n}-B\;\vol(\Omega_j),
\ee
and passing to the limit as $j\to\infty$ we obtain the result. 

\qed

\medskip
\begin{rk} We note that the sequence $\{\Omega_j\}$ defined in the above proof is not necessarily a minimizing sequence for $I_M$. The domain $\Omega_0$ is in fact constructed from a minimizing sequence $\{\wt\Omega_j\}$, in such a way that (up to a subsequence, and for $v$ small enough) $\vol(\wt\Omega_j)=v\to\vol(\Omega_0)$, and $\per(\wt\Omega_j)\to I_M(v)= \per(\Omega_0)$. However, we are not guaranteed that the diameter of the domains  $\wt\Omega_j$ are small enough in order to apply Corollary \ref{druet1} to those sets. 

\vskip1em
\section{Proof of Theorem \ref{thm}}\label{7}

The proof is an immediate consequence of Theorems \ref{loc-glob}, \ref{smallprofile-sobolev}, \ref{expansion}.

\qed 

\bigskip
\section{Appendix}

Here is an outline of the proof of \eqref{ps}, assuming $u\in C_c^\infty(\Omega)$, $u\ge0$. If  $\phi\in L^1(M)$ then (coarea formula) 
\be \int_M\phi\;|\nabla u|d\mu=\int_0^\infty dt \int_{ u^{-1}(t)} \phi\; d{\mathcal H}_{n-1}\label {coarea}\ee
where ${\mathcal H}_{n-1}$ denotes the $(n-1)$-dim. Hausdorff measure (formula is also valid if $u$ is only Lipschitz). By Sard's theorem,  a.e. $t>0$ is a regular value of $u$, that is, the level set $u^{-1}(t)=\{u=t\}$ is a smooth $(n-1)$-dimensional submanifold of $M$,  $\nabla u\neq0$ on that level set,  $\mu(\{u=t\})=0,$  $\mu(t)=\mu(\{u>t\})$ is not constant around $t$, strictly decreasing and continuous at $t$, and $u^*(\mu(t))=t$.  
Also, for $t$ regular we have
\be \mu'(t)=-\int_{u=t} \frac {d\mathcal H_{n-1}}{|\nabla u|}=-\int_{u^\#=t}\frac {d\mathcal H_{n-1}}{|\nabla u^\#|}=-\frac{{\mathcal H}_{n-1}(\{u^\#=t\})}{\big|\nabla u^\#_{/\{{u^\#=t\}}}\big|}\label{tt2}\ee
where the first identity comes from \eqref{coarea}, and the second and third identities from the fact that   if $t$ is a regular value of $u$ with $\{u^\#>t\}=B(0,|\xi|)$, then  $\mu(t)=B_n|\xi|^n$, $u^*(\mu(t))=t$,   $(u^*)'(\mu(t))\mu'(t)=1$, and $|\nabla u^\#(\xi)|=(-u^*)'(B_n|\xi|^n)nB_n|\xi|^{n-1}$. 

From \eqref{coarea} one deduces that $u^\#$  is Lipschitz (see \cite{talenti}, Page 363).

Using H\"older's inequality and aguing ad in \cite{talenti} pp 361-362, we can obtain, for a.e. $t$,
\be\int_{u=t} |\nabla u|^{p-1} d{\mathcal H}_{n-1}\ge (-\mu'(t))^{1-p}\;{\mathcal H}_{n-1}(\{u=t\})^p= (-\mu'(t))^{1-p}\;\per(\{u>t\})^p\label {tt1}\ee
and from the definition of $I_0(v)$
\be \per(\{u>t\})\ge \mu(t)^{\frac{n-1}n}nB_n^{\frac1n} I_0(\mu(t))=\per(\{u^\#>t\})I_0(\mu(t)).\ee

Let $\Omega^\#=B(0,R)$ and $\tilde u(r)=u^*(B_nr^n)$, a decreasing  Lipschitz function. Then $\big|\nabla u^\#_{/\{{u^\#=t\}}}\big|=-\tilde u'(r(t))$, where $r(t)$ is the unique $r$ s.t. $\tilde u(r)=t$ when $t$ is regular (i.e. a.e). Using the coarea formula \eqref{coarea}, and \eqref{tt2}
\be\ba\int_\Omega|\nabla u|^p d\mu&=\int_M|\nabla u|^{p-1}|\nabla u| d\mu=\int_0^{\mu(\Omega)} dt \int_{u=t} |\nabla u|^{p-1} d{\mathcal H}_{n-1}\cr&
\ge\int_0^{\mu(\Omega)}(-\mu'(t))^{1-p}\per(\{u^\#>t\})^p I_0(\mu(t))^pdt\cr&=
\int_0^{\mu(\Omega)} (-\tilde u'(r(t)))^{p-1} \per(\{u^\#>t\}) I_0(\mu(t))^pdt
\ea\ee
On the other hand, if $\Omega^\#=B(0,R)$ and letting $\tilde u(r)=u^*(B_nr^n)$, a decreasing  Lipschitz function, then by polar coordinates
 \be\ba&\int_{\Omega^\#}|\nabla u^\#|^p I_0(B_n|\xi|^n)^p d\xi=\int_0^R (-\tilde u'(r))^{p} n B_n r^{n-1}I_0(B_nr^n)^pdr\cr&=\int_0^R (-\tilde u'(r))^{p-1} n B_n r^{n-1}I_0(B_n r^n)^p (-\tilde u'(r))dr\cr&=
\int_0^{\mu(\Omega)} (-\tilde u'(r(t)))^{p-1} \per(\{u^\#>t\}) I_0(\mu(t))^pdt\le\int_\Omega|\nabla u|^p d\mu, \ea\ee 
where for the last  identity we used the 1-dimensional coarea formula applied to the Lipschitz function $-\tilde u$.
\bigskip

\ni{\bf Data availability statement.} Data sharing is not applicable, since no data were used for this research.

\medskip
\ni {\bf Conflict of interest.} The authors declare no conflicts of interest, whether direct or indirect, related to this work.

\vskip1em
\begin{table}[h]

\setlength{\tabcolsep}{24pt} 
\begin{tabular}{@{}p{0.5\linewidth}p{0.5\linewidth}@{}}
\textbf{Carlo Morpurgo} & \textbf{Stefano Nardulli} \\
Department of Mathematics & Centro de Matem\'atica\\
University of Missouri & Cogni\c c\~ao  e Computa\c c\~ao\\
Columbia, Missouri 65211 & Universidade Federal do ABC \\
USA & Santo Andr\'e, SP \\
\texttt{morpurgoc@umsystem.edu} & Brazil \\
{}&\texttt{stefano.nardulli@ufabc.edu.br} \\\\
\textbf{Liuyu Qin} & \\
Department of Mathematics and Statistics & \\
Hunan University of Finance and Economics & \\
Changsha, Hunan & \\
China & \\
\texttt{Liuyu\_Qin@outlook.com} & \\
\end{tabular}
\end{table}
\end{document}